\documentclass{article}
\usepackage[utf8]{inputenc}
\usepackage{wrapfig}
% \documentclass[10pt,journal,a4paper]{IEEEtran}

%\IEEEoverridecommandlockouts % This command is only needed if

% you want to use the \thanks command

%\overrideIEEEmargins % Needed to meet printer requirements.
\usepackage[utf8]{inputenc}

\usepackage{mathtools,amsmath,amssymb,amsfonts,amsthm,upgreek}
\usepackage{mathalfa}
\usepackage{microtype}
\usepackage{tikz,tikz-cd}
\usetikzlibrary{arrows}
\usetikzlibrary{decorations.markings}
\tikzset{degil/.style={line width=0.5pt,double distance=5pt,
        decoration={markings,
        mark= at position 0.5 with {
              \node[transform shape] (tempnode) {$\backslash\backslash$};
              %\draw[thick] (tempnode.north east) -- (tempnode.south west);
              }
          },
          postaction={decorate}
}
}

\tikzset{
commutative diagrams/.cd,
arrow style=tikz,
diagrams={>=open triangle 45, line width=0.5pt}}

\usetikzlibrary{calc}
\usepackage{color}
\usepackage{url}

\setlength{\marginparwidth}{2cm}
\usepackage[backgroundcolor=white,linecolor=yellow,bordercolor=yellow,textcolor=black,textsize=small]{todonotes}

\graphicspath{{./figures/}}

\usepackage[english]{babel}
\usepackage{subcaption,graphicx}
\usepackage{mathtools,amsmath,amssymb,amsfonts,amsthm}
\usepackage{mathalfa}
\usepackage{tikz}
\usetikzlibrary{calc}
\usepackage{color}
\usepackage{url}
\usepackage{microtype}

\usepackage{enumitem}

\usepackage[english]{babel}

\usepackage{tabularx}

\usepackage{xcolor}

\usepackage{verbatim}
\usepackage{xspace}

\usepackage{amsfonts}
\usepackage{mathtools}
\usepackage{amsmath}

\usepackage{amssymb}

\usepackage{setspace}

\setlength{\textfloatsep}{0.1 cm}

\usepackage{ifthen}
\usepackage{tikz}
\usetikzlibrary{automata,positioning, arrows}

\theoremstyle{plain}
\newtheorem{thm}{Theorem}
\newtheorem{lemma}{Lemma}
\newtheorem{prop}{Proposition}
\newtheorem{cor}{Corollary}

\theoremstyle{definition}

\newtheorem{defn}{Definition}

\theoremstyle{remark}

\newtheorem{rem}{Remark}
\newtheorem{assumption}{Assumption}

\setlength{\textfloatsep}{0.1 cm}
\usepackage{fullpage}

\newenvironment{example}[1][0]
{ % if no argument are passed starts a new myExample environment
  \ifthenelse{\equal{#1}{0}}{
  \myexample
}
{ % if the an argument is passed, then set the Example number
  \myexample
  % and decrease the example counter by 1
  \addtocounter{myexample}{-1}
}
}
{\endmyexample}

\newcommand{\R}{\mathbb{R}}{}
\newcommand{\N}{\mathbb{N}}

\newcommand{\cA}{\mathcal{A}}

\newcommand{\cC}{\mathcal{C}}

\newcommand{\cI}{\mathcal{I}}
\newcommand{\cK}{\mathcal{K}}

\newcommand{\cKL}{\mathcal{KL}}

\newcommand{\cF}{\mathcal{F}}

\newcommand{\cS}{\mathcal{S}}
\newcommand{\cQ}{\mathcal{Q}}

\newcommand{\cLL}{\text{Lip}_{\ell}}

\newcommand{\wt}{\widetilde}

\newcommand{\dw}{\text{dw}}

\newcommand{\restr}[2]{{% we make the whole thing an ordinary symbol
  \left.\kern-\nulldelimiterspace % automatically resize the bar with \right
  #1 % the function
  \vphantom{\big|} % pretend it's a little taller at normal size
  \right|_{#2} % this is the delimiter
  }}
\title{Converse Lyapunov Results for Switched Systems with Lower and Upper Bounds on Switching Intervals}

\author{Matteo Della Rossa\thanks{The Author is with ``Dipartimento di Scienze matematiche, informatiche e fisiche'', University of Udine, Udine, Italy ).
        {\small matteo.dellarossa@uniud.it}}}

\begin{document}
\maketitle
\begin{abstract}
 The topic of this manuscript is the stability analysis of continuous-time switched nonlinear systems with constraints on the admissible switching signals. Our particular focus lies in considering signals characterized by upper and lower bounds on the length of the switching intervals. We adapt and extend the existing theory of multiple Lyapunov functions, providing converse results and thus a complete characterization of uniform stability for this class of systems. We specify our results in the context of switched linear systems, providing the equivalence of exponential stability and the existence of multiple Lyapunov norms. By restricting the class of candidate Lyapunov functions to the set of quadratic functions, we are able to provide semidefinite-optimization-based numerical schemes to check the proposed conditions. We provide numerical examples to illustrate our approach and highlight its advantages over existing methods.
\end{abstract}

\section{Introduction}
Within the broader class of hybrid dynamical systems (as detailed in \cite{goebel2012hybrid}), we focus our attention on the framework of switched systems. This setting is particularly noteworthy both in theoretical and practical perspectives. From a general point of view, switched systems exhibit a continuous-time evolution guided by a finite set of subsystems and discrete-time or jump phenomena associated with the switching between these subsystems. Formally, given $M\in \N$ and a finite set of vector fields $f_1,\dots, f_M:\R^n \to \R^n$, a \emph{switched system} is defined by
\begin{equation}\label{eq:SystemIntro}
\dot x(t)=f_{\sigma(t)}(x(t)),\;\;\;t\in \R_+.
\end{equation}
Here, $\sigma:\R_+\to \{1,\dots, M\}$ is a discrete-valued signal, referred to as the \emph{switching signal}, that models the \emph{switching} among the subsystems. For a comprehensive overview of this class of hybrid systems, we refer to~\cite{Lib03,LinAnt09,ShoWir07}.

The study of stability of~\eqref{eq:SystemIntro} has been the subject of extensive research in recent decades, both in the nonlinear and linear cases.  In both settings, the behavior of~\eqref{eq:SystemIntro} is strongly affected by the properties and assumptions concerning the class of switching signals under consideration.
When examining the class of measurable switching signals as a whole, uniform stability of~\eqref{eq:SystemIntro} is equivalent to the existence of a \emph{common} Lyapunov function, i.e. a positive definite function decreasing along \emph{any} subsystem, see~\cite{Lib03,Mancilla00} and references therein.

However, when narrowing the class of feasible switching signals, the Lyapunov characterization of uniform stability becomes non-trivial in general. One common approach to restrict the class of switching signals is by imposing a bounded time-threshold on the switching events or, equivalently, setting a minimum time between switches. These signals, known as \emph{dwell-time signals} in the literature, were introduced in the seminal paper~\cite{Morse}, and further studied and generalized in~\cite{HesMor99,Lib03}. In this setting, a mature Lyapunov function theory has emerged, offering a characterization of uniform stability in terms of \emph{multiple} Lyapunov functions, see~\cite{Wirth2005,WirthCDC05,ChiGugPro21}. Sufficient Lyapunov conditions for stability over the more general class of \emph{average dwell-time} signals can be found in~\cite[Chapter 3]{Lib03}, while numerical methods for designing such functions are discussed in~\cite{BlaCol10,GerCol06,AllSha11,CheCol12,HafTan23}.

When dealing with possibly unstable subsystems, it can be beneficial to extend the concept of dwell-time signals  by also imposing an \emph{upper bound} on the distance between switching events. In essence, given any $\tau_2\geq \tau_1\geq 0$, one can consider signals that, after any switching instant, remain constant at least $\tau_1$ time units, but no more than $\tau_2$ time units. Indeed, by bounding from above the time of permanence in any subsystem, one can ensure stability of~\eqref{eq:SystemIntro}, even if all the subsystems are unstable, see~\cite{YangJia14} for an overview. This class of signals and possible generalizations have recently been introduced and studied for example in~\cite{BriSeu13,Xiangxiao14,KunChat15,Bri16,ZhaoShi17,YangWang20,YinJay23} and references therein.  \textcolor{black}{In~\cite{Briat17} the case of positive impulsive systems under the same class of constraints on the switching/jump schedule is considered.}
\textcolor{black}{Recent results presented in~\cite{ProKam23}, provided converse Lyapunov results for \emph{linear systems} under such class of signals.
}

This framework can find applications in various physical scenarios. A field of possible application is in the context of~\emph{event-triggered control} (see \cite{HeemelsJohn12} for an overview); in this setting, the update (or switching) in the control policy is \emph{triggered} by the satisfaction (or not) of a pre-designed condition (the ``\emph{event}''). If it is known that this event occurs, after any update, not before $\tau_1$  and not after $\tau_2$ time units, then the tools developed for the previously introduced class of switching signals can be used in the analysis and control design in this scenario.

In this manuscript, we study stability of switched systems as in~\eqref{eq:SystemIntro}, under the class of signals with upper and lower \textcolor{black}{ bounds on the length of switching intervals} introduced in the previous paragraph. By adapting a proof technique provided in~\cite{WirthCDC05,Wirth2005} in the case of dwell-time signals, we are able to provide a comprehensive Lyapunov functions characterization of uniform stability. Specifically, considering a partition of the considered family of signals, we prove two (independent and novel) \emph{converse multiple Lyapunov functions} results, in a general non-linear subsystems setting. These constructions can also be interpreted, in a sense we will clarify, as  graph theory-based results. Therefore we draw comparison between our framework and the graphs-oriented findings in~\cite{ChiGugPro21} (in the context of dwell-time switched linear systems) and the results in~\cite{ahmadi,PhiEssDul16,DebDelRos22} for \emph{discrete-time} switched systems. We specify our construction in the case of switched \emph{linear} systems, providing a Lyapunov characterization of uniform exponential stability in terms of multiple Lyapunov \emph{norms}, as previously done in~\cite{Wirth2005,ChiGugPro21} for the case without upper bounds. \textcolor{black}{In this linear case, our results are compared with the recent multiple-norms characterization of exponential stability provided in~\cite{ProKam23}. Here, assuming that a  switched linear system is exponentially stable, the existence of a multi-norm decreasing at the switching instants is proven. Despite the underlying proof techniques are different, this result has strong relations with the  Lyapunov characterization proposed in this manuscript, as we discuss in what follows.
}

In order to provide conditions that are easily evaluated computationally, we further particularize our results by restricting the set of candidate Lyapunov functions to the set of quadratic functions, leading to semidefinite optimization programs (depending on some additional parameters). We show the benefit of our approach with the help of numerical examples, comparing our results with existing literature, notably~\cite{Xiangxiao14}.

The structure of the manuscript is the following: In Section~\ref{sec:Prelim} we provide the formal introduction of the studied framework, while in~Section~\ref{sec:MainResults} we illustrate our main results, in a general nonlinear setting. In Section~\ref{sec:LinearCase} we specify our approach in the case of switched linear systems and we present some numerical examples, before closing the discussion with some final remarks in Section~\ref{Sec:conclu}.
\\
\emph{Notation:}
The set $\R_+:=\{s\in \R\;\vert\;s\geq0\}$ denotes the set of non-negative real numbers.
Given $m,n\in \N$, the class  $\cLL(\R^n,\R^m)$ denotes the set of locally Lipschitz continuous functions from $\R^n$ to $\R^m$; while $\cC^1(\R^n,\R^m)$ denotes the set of continuously differentiable functions. \\
\emph{Comparison Functions Classes}:   A function $\alpha:\R_+\to \R$ is of \emph{class $\cK$} ($\alpha \in \cK$) if it is continuous, $\alpha(0)=0$, and strictly increasing; it is of \emph{class $\cK_\infty$} if, in addition, it is unbounded. A continuous function $\beta:\R_+\times \R_+\to \R_+$ is of \emph{class $\mathcal{KL}$} if $\beta(\cdot,s)$ is of class $\cK$ for all $s$, and $\beta(r,\cdot)$ is decreasing and $\beta(r,s)\to 0$ as $s\to\infty$, for all $r$. \

\section{Preliminaries}\label{sec:Prelim}
In this section we recall the main notions and definitions used in the rest of the manuscript.

Given $M\in \N$, we define $\cI:=\{1,\dots,M\}$, the \emph{index set}.
In defining the studied class of systems, we consider a set of vector fields $\cF:=\{f_1,\dots, f_M\}\subset \cLL(\R^n,\R^n)$; and we will assume the following.
\begin{assumption}\label{assumt:Regularity}
For any $i\in \cI$, we suppose that $f_i(0)=0$ and $f_i\in \cLL(\R^n,\R^n)$ is such that the corresponding \emph{subsystem} 
 \begin{equation}\label{eq:subsystem}
 \dot x=f_i(x)
 \end{equation}
exhibits existence, uniqueness and completeness (backward and forward) of solutions, denoted by $\Phi_i:\R\times \R^n\to \R^n$,~i.e.
\[
\begin{aligned}
\Phi_i(t,x):=&\text{solution to~\eqref{eq:subsystem}, starting at $x\in \R^n$},\\ &\text{ \textcolor{black}{evaluated} at time $t\in \R$.}
\end{aligned}
\]
\end{assumption}

Given a $V\in \cLL(\R^n,\R)$ and a system~\eqref{eq:subsystem} we denote by $D^+_{f_i}V$ the \emph{Dini-derivative of $V$ with respect to $f_i$}, defined by
\[
D^+_{f_i}V(x):=\limsup_{h\to 0^+}\frac{V(\Phi_i(h,x))-V(x)}{h}.
\]
We recall that if $V\in \cC^1(\R^n,\R)$ then $D^+_{f_i}V(x)=\nabla V(x)^\top f_i(x)$, for every $x\in \R^n$.

 Given a set of vector fields $\cF=\{f_1,\dots, f_M\}\subset \cLL(\R^n,\R^n)$ satisfying Assumption~\ref{assumt:Regularity} we consider the \emph{switched system} defined by
\begin{equation}\label{eq:SwitchedSystem}
\dot x(t)=f_{\sigma(t)}(x(t)),\;\;\;x(0)=x_0\in \R^n,\;\;t\in \R_+,
\end{equation}
where $\sigma:\R_+\to \cI$ is an external \emph{switching signal}. 
When all the subsystems are linear, we consider \emph{switched linear systems} defined by
\begin{equation}\label{eq:LinearSwitchedSystem}
\dot x(t)=A_{\sigma(t)}x(t),\;\;\;x(0)=x_0\in \R^n,\;\;t\in \R_+,
{}\end{equation}
where $\cA=\{A_1,\dots, A_M\}\subset \R^{n\times n}$ is a set of matrices and $\sigma:\R_+\to \cI$ is again an external switching signal. 

The switching signals $\sigma$ are selected, in general, among the set $\cS$ defined by
\begin{equation}\label{eq:arbitrarySwitching}
\cS:=\left\{\sigma:\R_+\to \cI\;\Big\vert\;\;\sigma\text{ piecewise constant and right continuous} \right\}.
\end{equation}
Given a signal $\sigma\in \cS$, we denote the sequence of switching instants, that is, the points at which $\sigma$ is discontinuous,  by $\{t^\sigma_k\}$. The set $\{t^\sigma_k \}$ may be infinite or finite, possibly reduced to the initial instant, defined by $t^\sigma_0:=0$; if it is infinite, then it is unbounded.
Given a $\sigma \in \cS$, a $x\in \R^n$ and a $t\in \R$ we denote by $\Phi_\sigma(t,x)$ the solution of~\eqref{eq:SwitchedSystem} starting at $x$ and evaluated at $t$ with respect to the switching signal {\color{black}$\sigma\in \cS$}.

We now introduce the considered concepts of stability.
\begin{defn}\label{defn:ExponentialStability}
Given a class of signal $\wt \cS\subseteq \cS$, system~\eqref{eq:SwitchedSystem} is said to be \emph{globally uniformly asymptotically  stable on $\wt \cS$} (GUAS)  if there exists a $\beta \in \cKL$ such that
\begin{equation}\label{eq:GUAS}
|\Phi_\sigma(t,x)|\leq \beta(|x|,t),\;\; \forall\,\sigma\in \wt \cS,\;\forall x\in \R^n,\;\forall \,t\in \R_+.
\end{equation}
In particular, given $\wt \cS\subseteq \cS$, system~\eqref{eq:SwitchedSystem} is said to be \emph{uniformly exponentially stable with decay $\rho>0$ on $\wt \cS$} ($\rho$-UES)  if there exists a $M\geq 0$ such that
\begin{equation}\label{eq:ExponentialStability}
|\Phi_\sigma(t,x)|\leq Me^{-\rho t}|x|,\;\;\forall\;\sigma\in \wt \cS,\;\forall x\in \R^n,\;\forall \;t\in \R_+.
\end{equation}
The supremum over the $\rho>0$ for which~\eqref{eq:ExponentialStability} is satisfied for some $M\geq 0$ is called the $\wt \cS$-\emph{exponential decay rate}, and it is denoted by $\rho_{\wt \cS}(\cF)$. 
\end{defn}

We recall that for switched \emph{linear} systems as in~\eqref{eq:LinearSwitchedSystem}, given any set $\wt \cS\subset \cS$ \textcolor{black}{closed under time right-shifting}\footnote{ \color{black}A set $\wt\cS\subseteq \cS$ is said to be \emph{closed under time right-shifting} if \\$\sigma\in \wt \cS$ $\Leftrightarrow$ $\sigma(\cdot+t)\in \wt \cS$,  $\forall t\in \R_+$.}, (GUAS) on $\wt \cS$ imply ($\rho$-UES) (for a certain $\rho>0$) on $\wt \cS$, by linearity, see for example~\cite{Lib03,AngeliNote}~\cite[Chapter 5]{BacRosier}.

\section{Main Results}\label{sec:MainResults}
In this section we focus our analysis on the class of signals with a minimal and maximal time threshold between switching instants. We provide our main converse Lyapunov results for this class of signals, providing characterization of GUAS in terms of multiple Lyapunov functions.
\subsection{Signals with Upper-Lower Switching Bounds}
We provide here the main definitions and first results on the studied class of switched signals.
\begin{defn}
Given any $\tau_2\geq \tau_1\geq 0$, consider $\cS_{dw}(\tau_1,\tau_2)$ the class of signals with \emph{lower and upper dwell-time bounds $\tau_1,\tau_2$} defined by
\begin{equation}\label{eq:StrictDwellTime}
\cS_{\dw}(\tau_1,\tau_2)\hspace{-0.05cm}:=\hspace{-0.05cm}\left\{\sigma\in \cS\vert\,\tau_1\leq t^\sigma_{k} -t^\sigma_{k-1}\leq \tau_2,\,\forall t^\sigma_k>0  \right\}.
\end{equation}
Slightly weakening the condition in~\eqref{eq:StrictDwellTime}, we also consider the class $\cS_{\dw}^\star(\tau_1,\tau_2)$, defined by 
\begin{equation}\label{eq:EnlargedDwellTime}
\cS_{\dw}^\star(\tau_1,\tau_2):=\left\{\sigma\in \cS\;\Big\vert\;\tau_1\leq t^\sigma_{k+1} -t^\sigma_{k}\leq \tau_2,\;\forall \;t^\sigma_k>0\text{ and }\;t_1^\sigma\leq \tau_2   \right\}.
\end{equation}
\end{defn}
The difference between~\eqref{eq:StrictDwellTime} and~\eqref{eq:EnlargedDwellTime} is that in~\eqref{eq:StrictDwellTime} it is required that the first switching interval i.e. $[t_0^\sigma, t_1^\sigma]=[0,t_1^\sigma]$ has length between $\tau_1$ and $\tau_2$, as any other switching interval. In~\eqref{eq:EnlargedDwellTime}, instead, we only require that $t_1^\sigma\leq \tau_2$ without imposing any lower bound on the length of the first switching interval.
One advantage of this relaxation is that, for any $\tau_1,\tau_2>0$, we have
that   $\cS_{\dw}^\star(\tau_1,\tau_2)$ is closed under  time right-shifting.\\
\textcolor{black}{
Since the mentioned difference between these classes only affects the length of the first interval, the asymptotic behavior of~\eqref{eq:SwitchedSystem} for signals in $\cS_{\dw}(\tau_1,\tau_2)$ or $\cS^\star_{\dw}(\tau_1,\tau_2)$ is substantially the same, as proven in what follows.}
\begin{lemma}\label{Lemma:Equivalence}
Consider a set of vector fields $\cF=\{f_1,\dots, f_M\}\subset \cLL(\R^n,\R^n)$ satisfying Assumption~\ref{assumt:Regularity} and $\tau_2\geq \tau_1\geq 0$. System~\eqref{eq:SwitchedSystem} is GUAS on $\cS_{\dw}(\tau_1,\tau_2)$ if and only if it is GUAS on $\cS_{\dw}^\star(\tau_1,\tau_2)$.
\\
Given any $\rho>0$, system~\eqref{eq:SwitchedSystem} is $\rho$-UES on $\cS_{\dw}(\tau_1,\tau_2)$ if and only if it is $\rho$-UES on $\cS_{\dw}^\star(\tau_1,\tau_2)$.
\end{lemma}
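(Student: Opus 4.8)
\emph{Proof sketch.} The two classes differ only in the constraints on the first switching interval $[0,t_1^\sigma]$, whose length is in any case at most $\tau_2$; the plan is to absorb the effect of this single interval into the comparison function (resp.\ the constants). First I would record the inclusion $\cS_{\dw}(\tau_1,\tau_2)\subseteq\cS_{\dw}^\star(\tau_1,\tau_2)$: a $\sigma\in\cS_{\dw}(\tau_1,\tau_2)$ is either constant (hence in both classes, since its switching-interval condition is vacuous) or it satisfies $\tau_1\le t_1^\sigma\le\tau_2$ together with $t_{k+1}^\sigma-t_k^\sigma\in[\tau_1,\tau_2]$ for all $k\ge1$, hence lies in $\cS_{\dw}^\star(\tau_1,\tau_2)$. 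Consequently GUAS (resp. $\rho$-UES) on $\cS_{\dw}^\star(\tau_1,\tau_2)$ trivially implies GUAS (resp. $\rho$-UES) on $\cS_{\dw}(\tau_1,\tau_2)$ with the same $\beta$ (resp.\ the same $M,\rho$). All the content is in the converse implications.

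So assume GUAS on $\cS_{\dw}(\tau_1,\tau_2)$ with $\cKL$-bound $\beta$, and fix $\sigma\in\cS_{\dw}^\star(\tau_1,\tau_2)$. If $\sigma\in\cS_{\dw}(\tau_1,\tau_2)$ there is nothing to prove; since the two classes impose the same requirements on all but the first interval, the only way this can fail is that $\sigma$ has a first switch with $0<t_1^\sigma<\tau_1\le\tau_2$. In that case write $i:=\sigma(0)$, so that $\sigma\equiv i$ on $[0,t_1^\sigma)$ and $\Phi_\sigma(t_1^\sigma,x)=\Phi_i(t_1^\sigma,x)$, and consider the right-shift $\sigma':=\sigma(\cdot+t_1^\sigma)$, whose switching intervals are exactly $\{t_{k+1}^\sigma-t_k^\sigma\}_{k\ge1}\subseteq[\tau_1,\tau_2]$, so that $\sigma'\in\cS_{\dw}(\tau_1,\tau_2)$. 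By uniqueness of solutions (Assumption~\ref{assumt:Regularity}) one then has the cocycle identity
\[
\Phi_\sigma(t,x)=\Phi_{\sigma'}\big(t-t_1^\sigma,\Phi_i(t_1^\sigma,x)\big),\qquad t\ge t_1^\sigma .
\]
The last ingredient is a bound on the flows $\Phi_i$ over the compact window $[0,\tau_2]$, which I would obtain for free: every \emph{constant} signal belongs to $\cS_{\dw}(\tau_1,\tau_2)$, so the GUAS hypothesis already gives $|\Phi_i(s,x)|\le\beta(|x|,s)\le\beta(|x|,0)$ for all $i\in\cI$, $s\ge0$, $x\in\R^n$.

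Combining the cocycle identity with these two estimates, for $t\ge t_1^\sigma$ one gets $|\Phi_\sigma(t,x)|\le\beta\big(\beta(|x|,0),t-t_1^\sigma\big)$, while for $0\le t\le t_1^\sigma$ one has $|\Phi_\sigma(t,x)|=|\Phi_i(t,x)|\le\beta(|x|,t)$; using $t-t_1^\sigma\ge t-\tau_2$ this yields $|\Phi_\sigma(t,x)|\le\beta\big(\beta(|x|,0),t-\tau_2\big)$ for $t\ge\tau_2$ and $|\Phi_\sigma(t,x)|\le\gamma(|x|)$ for $0\le t\le\tau_2$, where $\gamma(r):=\max\{\beta(r,0),\beta(\beta(r,0),0)\}\in\cK$. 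A standard patching then produces a single $\cKL$ function, independent of $\sigma$, dominating both regimes, e.g. $\tilde\beta(r,t):=\beta\big(\beta(r,0),\max\{t-\tau_2,0\}\big)+e^{\tau_2-t}\gamma(r)$, and one checks $|\Phi_\sigma(t,x)|\le\tilde\beta(|x|,t)$ for all $t\ge0$; this gives GUAS on $\cS_{\dw}^\star(\tau_1,\tau_2)$. The $\rho$-UES case follows the same route, but more cleanly: constant signals in $\cS_{\dw}(\tau_1,\tau_2)$ give $|\Phi_i(s,x)|\le Me^{-\rho s}|x|$ for all $i,s,x$ (and we may take $M\ge1$), so in the non-trivial case
\[
|\Phi_\sigma(t,x)|\le Me^{-\rho(t-t_1^\sigma)}|\Phi_i(t_1^\sigma,x)|\le M^2e^{-\rho t}|x|,\qquad t\ge t_1^\sigma ,
\]
and $|\Phi_\sigma(t,x)|=|\Phi_i(t,x)|\le Me^{-\rho t}|x|\le M^2e^{-\rho t}|x|$ for $0\le t\le t_1^\sigma$; hence $\rho$-UES on $\cS_{\dw}^\star(\tau_1,\tau_2)$ with constant $M^2$.

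I expect the main difficulty to be bookkeeping rather than conceptual: one must correctly enumerate the switching instants of the shifted signal $\sigma'$ and verify $\sigma'\in\cS_{\dw}(\tau_1,\tau_2)$ in the degenerate situations (no switch, a single switch, and $\tau_1=0$, in which case the non-trivial case is empty), and carry out the routine $\cKL$-patching in the GUAS part. The two structural observations that make everything go through are that constant signals already belong to $\cS_{\dw}(\tau_1,\tau_2)$ (so flows over $[0,\tau_2]$ are automatically controlled by $\beta$, resp.\ $Me^{-\rho\,\cdot}$) and that $\cS_{\dw}^\star(\tau_1,\tau_2)$ is precisely $\cS_{\dw}(\tau_1,\tau_2)$ after deleting one bounded initial interval.
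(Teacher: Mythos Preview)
Your proposal is correct and follows essentially the same route as the paper: the inclusion $\cS_{\dw}(\tau_1,\tau_2)\subseteq\cS_{\dw}^\star(\tau_1,\tau_2)$ handles one direction, and for the other you shift by $t_1^\sigma$ so that $\sigma(\cdot+t_1^\sigma)\in\cS_{\dw}(\tau_1,\tau_2)$, bound the initial interval using that constant signals lie in $\cS_{\dw}(\tau_1,\tau_2)$, and then patch the two estimates. The only cosmetic differences are that the paper keeps the sharper bound $\beta(|x|,t_1^\sigma)$ rather than $\beta(|x|,0)$ and appeals to a reference for the existence of a dominating $\tilde\beta\in\cKL$, whereas you write one down explicitly.
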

\begin{proof}
Since it is clear that $\cS_{\dw}(\tau_1,\tau_2)\subset \cS_{\dw}^\star(\tau_1,\tau_2)$, only one direction is to be proven.
Suppose that~\eqref{eq:SwitchedSystem} is GUAS on $\cS_{\dw}(\tau_1,\tau_2)$. Given any $\sigma\in \cS^\star_{\dw}(\tau_1,\tau_2)$, any $t\leq t^\sigma_1$ and any $x\in \R^n$ we have $|\Phi_\sigma(t,x)|\leq \beta(|x|,t)$,
since $\sigma$ is constant on $[0,t^\sigma_1)$ and thus coincide with a signal $\gamma\in \cS_{\dw}(\tau_1,\tau_2)$ in this interval.
Then, for any $t\geq t_1^\sigma$, since $\sigma(\cdot+t^\sigma_1)\in \cS_{\dw}(\tau_1,\tau_2)$ we have
\[
|\Phi_\sigma(t,x)|=|\Phi_{\sigma(\,\cdot\,+t^\sigma_1)}(t-t^\sigma_1,\Phi_\sigma(t^\sigma_1,x))|\leq \beta(|\Phi_\sigma(t^\sigma_1,x)|, t-t_1^\sigma)\leq \beta(\beta(|x|,t_1^\sigma), t-t^\sigma_1).
\]
It suffices thus to consider a $\widetilde \beta\in \cKL$ such that
$
\beta(\beta(r,s), t-s)\leq \wt \beta(r,t)$, $\forall r\in \R_+,\forall s\leq \tau_2,\;\forall t\geq s$,
and such a $\widetilde \beta$ exists, see~\cite{Kellett2014}.

The UES case is similar: suppose that~\eqref{eq:SwitchedSystem} is $ \rho$-UES on $\cS_{\dw}(\tau_1,\tau_2)$, for some $\rho>0$.
Given any $\sigma\in \cS_{\dw}^\star(\tau_1,\tau_2)$ for any $t< t_1^\sigma$  and any $x\in \R^n$ we have $|\Phi_\sigma(t,x)|\leq Me^{-\rho t}|x|$, since $\sigma$ is constant on $[0,t^\sigma_1)$. Then, for any $t\geq t_1^\sigma$, since $\sigma(\cdot+t^\sigma_1)\in \cS_{\dw}(\tau_1,\tau_2)$ computing we have
\[
\begin{aligned}
|\Phi_\sigma(t,x)|\leq Me^{-\rho(t-t_1^\sigma)}|\Phi_\sigma(t_1^\sigma, x)|\leq Me^{-\rho(t-t_1^\sigma)}Me^{-\rho t_1^\sigma}|x|=M^2 e^{-\rho t}|x|,
\end{aligned}
\]
concluding the proof.
\end{proof}
\textcolor{black}{In Lemma~\ref{Lemma:Equivalence} we proved that (asymptotic/exponential) stability with respect to  $\cS_{\dw}(\tau_1,\tau_2)$ or $\cS_{\dw}^\star(\tau_1,\tau_2)$ are equivalent. We introduced both these slightly different classes of signals since they will provide the main tool for different converse Lyapunov results, provided in the next subsection.}
\subsection{Converse Lyapunov Results}
We prove here  our first Lyapunov characterization result for GUAS of system~\eqref{eq:SwitchedSystem}. The idea behind the proof is to consider a partition of the set of signals $\cS_{\dw}(\tau_1,\tau_2)$.
\begin{thm}\label{Thm:ConverseResult1}
Consider a set of vector fields $\cF=\{f_1,\dots, f_M\}\subset \cLL(\R^n,\R^n)$ satisfying Assumption~\ref{assumt:Regularity}. Given $\tau_2\geq \tau_1\geq 0$, system~\eqref{eq:SwitchedSystem} is GUAS on $\cS_{\dw}(\tau_1,\tau_2)$ (and thus on~$\cS^\star_{\dw}(\tau_1,\tau_2)$) if and only if there exist $\alpha_1,\alpha_2\in \cK_\infty$  and continuous functions $V_i^-, V_i^+:\R^n\to \R$, for $i\in \cI$, such that 
\begin{subequations}\label{eq:ConditionsLowerUpper}
\begin{align}
\alpha_1(|x|)\leq V_i^{*}(x)\leq \alpha_2(|x|),\;\;\;&\forall i\in \cI,\,\forall *\in \{-,+\},\forall \,x\in \R^n,
\label{eq:UpperLowerSandwich}\\ \noalign{\vskip4pt}
V_i^+(\Phi_i(t,x))\leq e^{- t}V_i^-(x),\;\;\;&\forall i\in \cI,\forall t\in [0,\tau_2-\tau_1],\;\forall\,x\in \R^n,\label{eq:UpperLowerInequaility1}\\\noalign{\vskip4pt}
\hspace{-0.8cm}V_j^-(\Phi_i(\tau_1,x))\leq e^{-\tau_1}V_i^+(x),\;\;\;&\forall\,i\neq j\in \cI,\;\forall\;x\in \R^n.\label{eq:UpperLowerInequaility2}
\end{align}
\end{subequations}
\end{thm}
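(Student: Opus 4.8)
The plan is to prove the two implications separately; by Lemma~\ref{Lemma:Equivalence} it suffices to work with $\cS_{\dw}(\tau_1,\tau_2)$ throughout.

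\emph{Sufficiency.} Suppose functions $V_i^\pm$ satisfying \eqref{eq:ConditionsLowerUpper} are given. Fix $\sigma\in\cS_{\dw}(\tau_1,\tau_2)$ and $x_0\in\R^n$; let $0=t_0<t_1<\dots$ be the switching instants, $\sigma_k$ the value of $\sigma$ on $[t_k,t_{k+1})$, $s_k:=t_{k+1}-t_k\in[\tau_1,\tau_2]$, and $y_k:=\Phi_\sigma(t_k,x_0)$. The crucial observation is that \eqref{eq:UpperLowerInequaility1}--\eqref{eq:UpperLowerInequaility2} chain along one switching interval: writing $s_k=\tau_1+r$ with $r\in[0,\tau_2-\tau_1]$ and $y_{k+1}=\Phi_{\sigma_k}(\tau_1,\Phi_{\sigma_k}(r,y_k))$, apply \eqref{eq:UpperLowerInequaility2} (with $i=\sigma_k\neq\sigma_{k+1}=j$) and then \eqref{eq:UpperLowerInequaility1} to get
\[
V_{\sigma_{k+1}}^-(y_{k+1})\ \le\ e^{-\tau_1}V_{\sigma_k}^+\!\bigl(\Phi_{\sigma_k}(r,y_k)\bigr)\ \le\ e^{-\tau_1-r}V_{\sigma_k}^-(y_k)\ =\ e^{-s_k}V_{\sigma_k}^-(y_k).
\]
Telescoping, with $t_k=\sum_{j<k}s_j$, gives $V_{\sigma_k}^-(y_k)\le e^{-t_k}\alpha_2(|x_0|)$, hence $|y_k|\le\alpha_1^{-1}\!\bigl(e^{-t_k}\alpha_2(|x_0|)\bigr)$ by \eqref{eq:UpperLowerSandwich}. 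Since the $f_i$ are complete with continuous flows (Assumption~\ref{assumt:Regularity}), the quantity $\sup\{|\Phi_i(s,z)|:\,i\in\cI,\,s\in[0,\tau_2],\,|z|\le r\}$ is finite, nondecreasing in $r$ and vanishes as $r\to0$, hence is dominated by some $\gamma\in\cK_\infty$; therefore for $t\in[t_k,t_{k+1})$ one has $|\Phi_\sigma(t,x_0)|=|\Phi_{\sigma_k}(t-t_k,y_k)|\le\gamma(|y_k|)$, and combining with $t_k>t-\tau_2$ yields the estimate $|\Phi_\sigma(t,x_0)|\le\gamma\bigl(\alpha_1^{-1}(e^{\tau_2}\alpha_2(|x_0|)e^{-t})\bigr)$, which is of class $\cKL$ and uniform in $\sigma$.

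\emph{Necessity.} Assume GUAS with $\cKL$ bound $\beta$. Adapting the converse dwell-time construction of~\cite{Wirth2005,WirthCDC05}, first use a standard comparison lemma (see~\cite{Kellett2014}) to pick $\theta_1,\theta_2\in\cK_\infty$ with $\beta(r,t)\le\theta_1\bigl(\theta_2(r)e^{-t}\bigr)$ for all $r,t\ge0$, and fix $\mu\in\cK_\infty$ with $\mu\le\theta_1^{-1}$ and $\mu(v)=o\bigl(\theta_1^{-1}(v)\bigr)$ as $v\to0^+$. For $i\in\cI$ let $\Sigma_i^-$ be the set of $\sigma\in\cS_{\dw}(\tau_1,\tau_2)$ with $\sigma(0)=i$ (``just switched into mode $i$'') and $\Sigma_i^+$ the set of signals equal to $i$ on $[0,\tau_1)$, switching at $\tau_1$, with $\tau_1$-shift in $\cS_{\dw}(\tau_1,\tau_2)$ (``mode $i$ with exactly $\tau_1$ left to dwell''). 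Define, for $*\in\{-,+\}$,
\[
V_i^{*}(x):=\sup_{\sigma\in\Sigma_i^{*}}\ \sup_{t\ge0}\ e^{t}\,\mu\bigl(|\Phi_\sigma(t,x)|\bigr).
\]
One then checks that \eqref{eq:UpperLowerSandwich} holds with $\alpha_1:=\mu$ and $\alpha_2:=\theta_2$ (lower bound by taking $t=0$; upper bound because every $\sigma\in\Sigma_i^{*}$ lies in $\cS_{\dw}(\tau_1,\tau_2)$, so $e^{t}\mu(|\Phi_\sigma(t,x)|)\le e^{t}\theta_1^{-1}(\beta(|x|,t))\le\theta_2(|x|)$). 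Inequalities \eqref{eq:UpperLowerInequaility1}--\eqref{eq:UpperLowerInequaility2} follow by \emph{trajectory concatenation}: given $\sigma\in\Sigma_i^+$ and $t\in[0,\tau_2-\tau_1]$, the signal $\gamma$ equal to $i$ on $[0,t+\tau_1)$ and equal to $\sigma$ shifted thereafter lies in $\Sigma_i^-$ (its first interval has admissible length $t+\tau_1$) and satisfies $\Phi_\sigma(s,\Phi_i(t,x))=\Phi_\gamma(t+s,x)$ for all $s\ge0$; multiplying by $e^{s}=e^{-t}e^{t+s}$ and passing to suprema gives $V_i^+(\Phi_i(t,x))\le e^{-t}V_i^-(x)$. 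Symmetrically, for $i\neq j$, prepending a dwell of length $\tau_1$ in mode $i$ to an arbitrary $\eta\in\Sigma_j^-$ produces a signal in $\Sigma_i^+$, yielding $V_j^-(\Phi_i(\tau_1,x))\le e^{-\tau_1}V_i^+(x)$. (For $M=1$ or $\tau_1=0$ the families $\Sigma_i^\pm$ require only a cosmetic adjustment.)

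\emph{Main obstacle.} The delicate step is the continuity of $V_i^\pm$, since a supremum of continuous functions is a priori only lower semicontinuous. Upper semicontinuity (hence continuity) will be obtained from two estimates that are uniform for $|x|$ in a fixed compact set and \emph{for all} $\sigma\in\Sigma_i^{*}$: first, the choice $\mu=o(\theta_1^{-1})$ together with $\beta(r,t)\le\theta_1(\theta_2(r)e^{-t})$ forces $e^{t}\mu(|\Phi_\sigma(t,x)|)\to0$ as $t\to\infty$ uniformly, so that the inner supremum is attained on a bounded time interval depending only on $|x|$; second, since there are only finitely many (locally Lipschitz) vector fields, $\Phi_\sigma$ depends continuously on the initial condition with a modulus of continuity uniform in $\sigma$ over compact time sets. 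Combining these yields the required continuity and completes the proof. (If a strict exponential weight $e^{t}$ is not needed one may instead use $e^{\lambda t}$ with $\lambda>0$ small and rescale.)
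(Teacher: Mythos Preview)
Your proof is correct. The \emph{necessity} direction is essentially the same as the paper's: you use the identical partition $\Sigma_i^-=\cS^-(i)$, $\Sigma_i^+=\cS^+(i)$ and the same sup-over-trajectories construction, the only cosmetic difference being that you introduce $\mu\le\theta_1^{-1}$ with $\mu=o(\theta_1^{-1})$ in order to sketch the continuity argument explicitly, whereas the paper applies $\rho_1^{-1}$ directly and defers continuity to~\cite[Proposition~5]{TeelPraly2000}.

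The \emph{sufficiency} direction, however, follows a genuinely different route. The paper builds, for each $\sigma$, a time-varying function $U_\sigma(t,x)$ that decreases along the \emph{entire} trajectory, by invoking the interpolation Lemma~\ref{lem:IntermadiateLemma} to glue $V_i^+$ to $V_j^-$ continuously over each $\tau_1$-window; the $\cKL$ bound then comes from $U_\sigma$ and its uniform $\cK_\infty$ sandwich. You instead chain~\eqref{eq:UpperLowerInequaility1}--\eqref{eq:UpperLowerInequaility2} only at switching instants to get the discrete decay $V_{\sigma_{k+1}}^-(y_{k+1})\le e^{-s_k}V_{\sigma_k}^-(y_k)$, and fill in between switches with a crude flow bound $\gamma\in\cK_\infty$ coming from compactness of $[0,\tau_2]$ and continuity of the finitely many flows. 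Your approach is more elementary---it avoids the interpolation lemma entirely---and yields the $\cKL$ estimate more quickly. The paper's construction, on the other hand, produces an explicit Lyapunov-like function decreasing along the full continuous trajectory, which is conceptually closer to the graph-based picture in Figure~\ref{figure:Figure1} and may be useful in its own right (e.g., for the linear specialisation in Section~\ref{sec:LinearCase}).
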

\begin{proof}
($\Leftarrow$:) Suppose there exist functions $V_i^-,V_i^+:\R^n\to \R$, $i\in \cI$, satisfying~\eqref{eq:ConditionsLowerUpper}.
Consider any $\sigma\in \cS_{\dw}(\tau_1,\tau_2)$, we will construct a function $U_\sigma:\R_+\times \R^n\to \R$ decreasing along any solution. Figure~\ref{figure:Figure1} provides a graphical illustration of the idea behind the subsequent construction. We will proceed by defining $U_\sigma$ by steps, in any interval of the form $[t_k^\sigma, t^\sigma_{k+1}]$, $k\in \N$. Suppose $\sigma(t^\sigma_0)=\sigma(0)=i$, $t^\sigma_1\in [\tau_1,\tau_2]$, and $\sigma(t^\sigma_1)=j\neq i$.
First, using the property~\eqref{eq:UpperLowerInequaility2}, let us consider a continuous function $U_{ij}:[0,\tau_1]\times \R^n\to \R$, defined as in Lemma~\ref{lem:IntermadiateLemma} in Appendix, satisfying
\[
\begin{aligned}
&U_{ij}(0,x)=V_i^+(x)\;\;\wedge\; \;U_{ij}(\tau_1,x)=V_j^-(x),\;\;\;\forall \,x\in \R^n,\\
&U_{ij}(t,\Phi_i(t,x))\leq e^{-t}U_{ij}(0,x),\;\;\;\;\forall\,(t,x)\in[0,\tau_1]\times \R^n.
\end{aligned}
\]
Let us define $U_\sigma$ on $[0,t_1^\sigma]\times \R^n$ by
\[
U_\sigma(t,x):=\begin{cases}
V^-_i(x)\;\;\;\;&\text{if }t=0,\\
V^+_i(x)\;\;\;\;&\text{if }t\in (0,t_1^\sigma-\tau_1],\\
U_{ij}(t-t_1^\sigma+\tau_1,x)\;\;\;\;&\text{if }t\in (t_1^\sigma-\tau_1,t^\sigma_1];
\end{cases}
\]
note that, by Lemma~\ref{lem:IntermadiateLemma}, we have $U_\sigma(t_1^\sigma,x)=V_j^-(x)$.
Using~\eqref{eq:UpperLowerInequaility1} and the aforementioned properties of the function $U_{ij}$ we have 
\[
U_\sigma(t,\Phi_i(t,x))\leq e^{-t}U_\sigma(0,x)=e^{-t}V^-_i(x),
\]
for all $(t,x)\in [0,t_1^\sigma]\times \R^n$.
We can iterate the construction on any interval of the form $[t_k^\sigma,t^\sigma_{k+1}]$, $k\in \N$, obtaining a function $U_\sigma:\R_+\times\R^n\to \R$ such that
\[
U_\sigma(t,\Phi_\sigma(t,x))\leq e^{-t}U_\sigma(0,x),\;\;\forall \;(t,x)\in \R_+\times \R^n.
\]
Moreover, using~\eqref{eq:UpperLowerSandwich} and Lemma~\ref{lem:IntermadiateLemma} it can be seen that there exist $\wt \alpha_1, \wt \alpha_2\in \cK_\infty$ (not depending on $\sigma\in \cS_{\dw}(\tau_1,\tau_2)$) such that
$
\wt \alpha_1(|x|)\leq U_\sigma(t,x)\leq \wt \alpha_2(|x|)$, for all $(t,x)\in \R_+\times \R^n$.
Summarizing,  we have
\[
\begin{aligned}
|\Phi_\sigma(t,x)|\leq \wt \alpha^{-1}_1\left(U_\sigma(t, \Phi_\sigma(t,x)) \right)\leq \wt \alpha^{-1}_1\left(e^{-t}U_\sigma(0,x) \right)\leq \wt \alpha^{-1}_1\left(e^{-t}\wt \alpha_2(|x|) \right),
\end{aligned}
\]
for any $\sigma \in\cS_{\dw}(\tau_1,\tau_2)$, any $t\in \R_+$ and any $x\in \R^n$. This concludes the proof, since the function defined by $\wt \beta(r,s):=\wt \alpha^{-1}_1(e^{-s}\wt \alpha_2(r))$ is of class $\cKL$ and it does not depend on $\sigma \in\cS_{\dw}(\tau_1,\tau_2)$.
\\
($\Rightarrow$:) Suppose that system~\eqref{eq:SwitchedSystem} is GUAS on $\cS_{\dw}(\tau_1,\tau_2)$ and thus there exists a $\beta\in\cKL$ such that~\eqref{eq:GUAS} holds.
First of all,  using \cite[Proposition 7]{Son98} (see also~\cite[Lemma 3]{TeelPraly2000}) there  exist functions $\rho_1,\rho_2\in \cK_\infty$ such that
$
\beta(s,t)\leq \rho_1(e^{-t}\rho_2(s))$, $\forall s\in \R_+,\;\;\forall\;t\in \R_+$,
and thus we have
\begin{equation}\label{eq:InequalityConverseLemma}
|\Phi_\sigma(t,x)|\leq \rho_1(e^{-t}\rho_2(|x|)),
\end{equation}
for all $\sigma\in \cS_{\dw}(\tau_1,\tau_2)$, for all $x\in \R^n$ and for all $t\in \R_+$. 
Let us consider, for every $i\in \cI$, the subfamilies of signals defined by
\begin{equation}\label{eq:SpecialClassOfSignals}
\begin{aligned}
\cS^-(i)&:=\{\gamma\in \cS_{\dw}(\tau_1,\tau_2) \;\vert\; \gamma(0)=i \},\\
\cS^+(i)&:=\{\gamma\in \cS_{\dw}(\tau_1,\tau_2)\;\vert\; \gamma(0)=i\;\wedge \; t^\gamma_1=\tau_1\}.
\end{aligned}
\end{equation}
We then define $V_i^-,V_i^+:\R^n\to \R$, $i\in \cI$ by
\[
\begin{aligned}
V_i^-(x):=\sup_{t\geq 0, \,\sigma \in \cS^-(i)}\{e^{t}\rho_1^{-1}(|\Phi_\sigma(t,x)|)\; \},\\
V_i^+(x):=\sup_{t\geq 0, \,\sigma \in \cS^+(i)}\{e^{t}\rho_1^{-1}(|\Phi_\sigma(t,x)|)\; \}.
\end{aligned}
\]
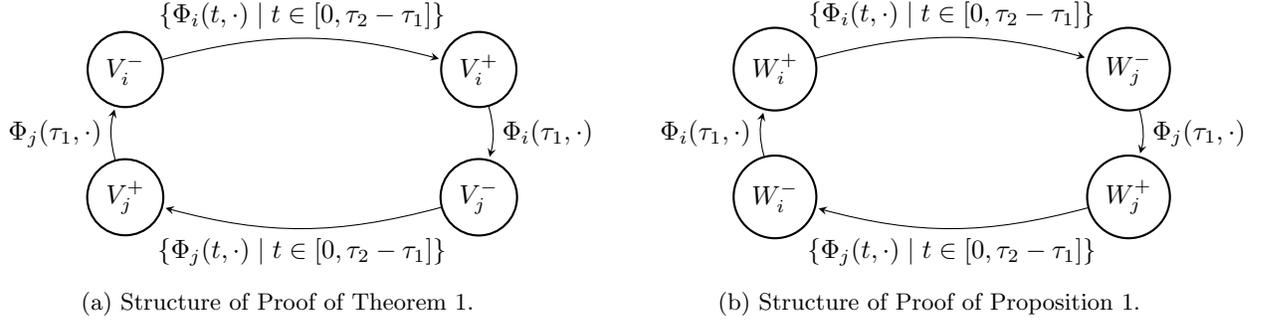
\begin{figure*}[t!]
\centering
\vspace{1cm}
\begin{subfigure}{0.45\linewidth}
  \color{black}
  \centering
  \begin{tikzpicture}%
  [>=stealth,
  shorten >=1pt,
  node distance=1cm,
  on grid,
  auto,
  state/.append style={minimum size=1cm},
  every state/.style={draw=black, fill=white, thick}
  ]
  \node[state] (left)  [yshift=-0.3cm]                {$V_i^-$};
  \node[state] (right) [right=of left, xshift=3.7cm] {$V_i^+$};
  \node[state] (belowleft) [below=of left, yshift=-0.7cm]{$V_j^+$};
   \node[state] (belowright) [below=of right,  yshift=-0.7cm]{$V_j^-$};
  \path[->]
  %   FROM       BEND/LOOP           POSITION OF LABEL   LABEL   TO
   (left) edge[bend left=15]     node                      {$\{\Phi_i(t,\cdot)\;\vert\;t\in [0,\tau_2-\tau_1]\}$} (right)
  (right) edge[bend left=15]     node                      {$\Phi_i(\tau_1,\cdot)$} (belowright)
 (belowright) edge[bend left=15]     node                      {$\{\Phi_j(t,\cdot)\;\vert\;t\in [0,\tau_2-\tau_1]\}$} (belowleft)
(belowleft)  edge[bend left=15]     node                      {$\Phi_j(\tau_1,\cdot)$} (left)
;
\end{tikzpicture}
\caption{ Structure of Proof of Theorem~\ref{Thm:ConverseResult1}.}
\end{subfigure}
\hspace{1cm}
\begin{subfigure}{0.45\linewidth}
  \color{black}
  \centering
  \begin{tikzpicture}%
  [>=stealth,
  shorten >=1pt,
  node distance=1cm,
  on grid,
  auto,
  state/.append style={minimum size=1.1cm},
  every state/.style={draw=black, fill=white, thick}
  ]
  \node[state] (left)  [yshift=-0.3cm]                {$W_i^+$};
  \node[state] (right) [right=of left, xshift=3.7cm] {$W_j^-$};
  \node[state] (belowleft) [below=of left, yshift=-0.7cm]{$W_i^-$};
   \node[state] (belowright) [below=of right,  yshift=-0.7cm]{$W_j^+$};
  \path[->]
  %   FROM       BEND/LOOP           POSITION OF LABEL   LABEL   TO
   (left) edge[bend left=15]     node                      {$\{\Phi_i(t,\cdot)\;\vert\;t\in [0,\tau_2-\tau_1]\}$} (right)
  (right) edge[bend left=15]     node                      {$\Phi_j(\tau_1,\cdot)$} (belowright)
 (belowright) edge[bend left=15]     node                      {$\{\Phi_j(t,\cdot)\;\vert\;t\in [0,\tau_2-\tau_1]\}$} (belowleft)
(belowleft)  edge[bend left=15]     node                      {$\Phi_i(\tau_1,\cdot)$} (left)
;
\end{tikzpicture}
\caption{ Structure of Proof of Proposition~\ref{Prop:ConverseResult2}.}
\end{subfigure}
\caption{The graph representations of the partitions and Lyapunov constructions used in Theorem~\ref{Thm:ConverseResult1} and Proposition~\ref{Prop:ConverseResult2}. Any arrow stands for a required inequality: for instance, an edge from the node $U_a$ to the node $U_b$ labeled by a (set of) operator(s) $\Phi_s(\tau,\cdot)$ represents the inequality $U_b(\Phi_s(\tau,x))\leq e^\tau U_a(x)$, $\forall \,x\in \R^n$. }
\label{figure:Figure1}
\end{figure*}
Inequality~\eqref{eq:UpperLowerSandwich} is straightforward, since $\cS^*(i)\subset \cS_{\dw}(\tau_1,\tau_2)$, for every $\forall i\in \cI,\;\forall *\in \{-,+\}$ and recalling~\eqref{eq:InequalityConverseLemma}, we have
$
\rho_1^{-1}(|x|)\leq V_i^\ast(x)\leq \rho_2(|x|)$, for all $x\in \R^n$.
Consider now any $\sigma\in \cS^+(i)$  and any $\tau\in [0,\tau_2-\tau_1]$; we define $\gamma_{\sigma,\tau}\in \cS$ by
\[
\gamma_{\sigma,\tau}(s)=\begin{cases}
i \;\;\;&\text{if } s< \tau,\\
\sigma(s-\tau) &\text{if } s\geq \tau;
\end{cases}
\]
 it is clear that $\gamma_{\sigma,\tau}\in \cS^-(i)$. Thus, considering $i\in \cI$, $\tau\in [0, \tau_2-\tau_1]$ and $x\in \R^n$, computing we have
 \[
 \begin{aligned}
V^+_i(\Phi_i(\tau,x))&=\sup_{t\geq 0, \,\sigma \in \cS^+(i)}\{e^{t}\rho_1^{-1}(|\Phi_\sigma(t,\Phi_i(\tau,x))|)\; \}=\sup_{t\geq 0, \,\sigma \in \cS^+(i)}\{e^{t}\rho_1^{-1}(|\Phi_{\gamma_{\sigma,\tau}}(t+\tau,x)|)\; \}\\&\leq \sup_{s\geq 0, \,\gamma \in \cS^-(i)}\{e^{s-\tau}\rho_1^{-1}(|\Phi_{\gamma}(s,x)|)\; \}=e^{-\tau}V^-_i(x),
\end{aligned}
 \]
 proving~\eqref{eq:UpperLowerInequaility1}. Now, consider any $j\neq i\in \cI$ and any $x\in \R^n$, with a similar reasoning, we have
 \[
 \begin{aligned}
V^-_j(\Phi_i(\tau_1,x))=\sup_{t\geq 0,\,\sigma\in \cS^-(j)}\{e^{t}\rho_1^{-1}(|\Phi_\sigma(t,\Phi_i(\tau_1,x))|)\;| \}\leq \sup_{s\geq 0,\,\gamma\in \cS^+(i)}\{e^{s-\tau_1 }\rho_1^{-1}(|\Phi_\gamma(s,x)|)\;| \}
  \leq e^{-\tau_1}V_i^+(x),
 \end{aligned}
 \]
 concluding.
 The continuity of $V_i^{+}, V_i^{-}$ for $i\in \cI$ is not proven here, we refer to~\cite[Proposition~5]{TeelPraly2000} for the technical argument. 
\end{proof}

In what follows, we discuss the relations of the conditions of Theorem~\ref{Thm:ConverseResult1} and existing results, in some particular cases.
\begin{rem}[Limiting cases] In the case $\tau_1=\tau_2=\wt \tau>0$ the class of signals $\cS_{\dw}(\tau_1,\tau_2)$ corresponds to the class of signals with a discontinuity at each point of the form $k\wt \tau$, for any $k\in \N$, called here the \emph{class of $ \wt \tau$-fixed-time switching signals}, formally defined by
\[
\cS_{\text{fix}}(\wt\tau)=\cS_{\dw}(\wt \tau, \wt \tau)=\{\sigma \in \cS\;\vert\; t^\sigma_{k+1}-t^\sigma_{k}=\wt \tau, \;\forall k\in \N \}.
\]
The asymptotic behavior of system~\eqref{eq:SwitchedSystem} on this class is completely determined by the behavior of the \emph{discrete-time} switched system defined by
\[
x(k+1)=\Phi_{\gamma(k)}(\wt \tau, x(k))
\]
with $\gamma:\N\to \cI$ a \emph{discrete-time} signal such that $\gamma(k)\neq \gamma(k+1)$, $\forall\;k\in \N$.
  For that reason, the statement of Theorem~\ref{Thm:ConverseResult1} can be seen as a particular case of the conditions presented in~\cite{ahmadi,PhiEssDul16} in the context of constrained discrete-time switched systems. More precisely, conditions~\eqref{eq:UpperLowerInequaility1}~\eqref{eq:UpperLowerInequaility2} in this case read
  \[
\begin{aligned}
V_i^+(x)\leq V_i^-(x),\;\;\;&\forall\, i\in \cI,\;\forall \;x\in \R^n,\\
V_j^-(\Phi_i(\wt \tau,x))\leq e^{-\wt \tau}V_i^+(x),\; \;\;&\forall\, i\neq j\in \cI,\;\forall \;x\in \R^n,
\end{aligned}
\]
and further identifying $V_i^-\equiv V_i^+$ for any $i\in \cI$,  the conditions turn out to require the existence of continuous functions $\wt V_1,\dots, \wt V_M:\R^n\to \R$ such that
\[
\wt V_j(\Phi_i(\wt \tau,x))\leq e^{-\wt \tau}\wt V_i(x),\; \;\;\forall\, i\neq j\in \cI,\;\forall \;x\in \R^n,
\]
and satisfying inequalities of the form~\eqref{eq:UpperLowerSandwich}. This corresponds, in a non-linear setting, to the conditions illustrated in~\cite{PhiEssDul16} for the class of discrete-time signals with no consecutive occurrences of the same symbol.

\textcolor{black}{
In the limiting case $\tau_2\to \infty$ (for a fixed $\tau_1\geq 0$), we have that $\cS_{\dw}(\tau_1,\tau_2)$ somehow approaches the class of \emph{dwell-time signals} 
\begin{equation}\label{Eq:PuerlyDwellTime}
\overline \cS_{\dw}(\tau_1):=\left\{\sigma\in \cS\;\vert\;\tau_1\leq t^\sigma_{k} -t^\sigma_{k-1},\;\forall \;t^\sigma_k>0  \right\},
\end{equation} introduced in the seminal reference~\cite{Morse} and intensively studied since then, see for example~\cite{HesMor99,Lib03,Wirth2005,WirthCDC05,GerCol06,ChiGugPro21} and references therein.} {\color{black}More precisely, for every $T\in \R_+$, there exists a $\tau_2\geq \tau_1$ such that  
\[
 \overline \cS_{\dw}(\tau_1)_{\vert_T}=\cS_{\dw}(\tau_1,\tau_2)_{\vert_{T}},
\]
where, given $\wt \cS\subseteq \cS$, we denote by $\wt \cS_{\vert_T}$ the set of $T$-restriction of signals in $\wt \cS$, i.e., $\wt \cS_{\vert_T}=\{\gamma:[0,T)\to \mathcal{I}\;\vert\; \exists \,\sigma\in \wt \cS\text{ s.t. } \gamma=\sigma_{\vert_{[0,T)}}\}$
}
In this case, for the sake of simplicity setting $V_i^+\equiv V_i^-$, for any $i\in \cI$, conditions~\eqref{eq:UpperLowerInequaility1}~\eqref{eq:UpperLowerInequaility2} in Theorem~\ref{Thm:ConverseResult1}  turn out to be 
\[
\begin{aligned}
\wt W_i(\Phi_i(t,x))&\leq e^{-t} \wt W_i(x)\;\;\forall \,i\in \cI,\;\forall \;x\in \R^n,\forall \;t\in \R_+,\\
\wt W_j(\Phi_i(\tau_1,x))&\leq e^{-\tau_1} \wt W_i(x)\;\;\forall i\neq j\in \cI,\;\;\forall \;x\in \R^n.
\end{aligned}
\] 
for some continuous functions $\wt W_1,\dots, \wt W_M:\R^n\to \R$ satisfying bounds as in~\eqref{eq:UpperLowerSandwich}. \textcolor{black}{These conditions are the specification, in the non-linear case, of the necessary and sufficient criteria introduced in~\cite{Wirth2005,WirthCDC05} for exponential stability on $\overline \cS_{\dw}(\tau_1)$, see also~\cite{GerCol06},~\cite{COLGer2008} for  LMIs results with the same structure.} This correspondence was somehow expected since the proof technique of~Theorem~\ref{Thm:ConverseResult1} was inspired by the arguments used in these references.
Summarizing, in the two ``limiting cases'' of fixed-time switching ($\tau_1=\tau_2$) and dwell-time signals ($\tau_2\to +\infty$), Theorem~\ref{Thm:ConverseResult1} recovers classic results already presented in the literature.
\end{rem}
We now provide an alternative Lyapunov result, which arises from a partition of signals in~$\cS^\star_{\dw}(\tau_1,\tau_2)$ (instead of $\cS_{\dw}(\tau_1,\tau_2)$). The proposed conditions are again necessary and sufficient, and different for those of Theorem~\ref{eq:SwitchedSystem}.
\begin{prop}\label{Prop:ConverseResult2}
Consider a set of vector fields $\cF=\{f_1,\dots, f_M\}\subset \cLL(\R^n,\R^n)$ satisfying Assumption~\ref{assumt:Regularity}. Given $\tau_2\geq \tau_1\geq 0$, system~\eqref{eq:SwitchedSystem} is GUAS on $\cS^\star_{\dw}(\tau_1,\tau_2)$ (and, thus on $\cS_{\dw}(\tau_1,\tau_2)$) if and only if there exist $\alpha_1,\alpha_2\in \cK_\infty$  and continuous functions $W_i^-, W_i^+:\R^n\to \R$,  such that 
\begin{subequations}\label{eq:ConditionsLowerUpper*}
\begin{align}
&\alpha_1(|x|)\leq W_i^{*}(x)\leq \alpha_2(|x|),\;\;\forall i\in \cI,\,\forall *\in \{-,+\},\;\forall \,x\in \R^n,\label{eq:UpperLowerSandwich2}
\\\noalign{\vskip4pt}
&W_i^+(\Phi_i(\tau_1,x))\leq e^{- \tau_1}W_i^-(x),\;\;\forall\,i\in \cI,\;\forall\;x\in \R^n,\label{eq:UpperLowerInequaility221}\\\noalign{\vskip4pt}
&W_j^-(\Phi_i(t,x))\leq e^{-t}W_i^+(x),\;\;\forall i\neq j\in \cI,\forall t\in [0,\tau_2-\tau_1],\;\forall\;x\in \R^n.\label{eq:UpperLowerInequaility222}
\end{align}
\end{subequations}
\end{prop}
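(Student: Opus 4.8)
The proof runs in parallel with that of Theorem~\ref{Thm:ConverseResult1}, with the ``fixed-time $\tau_1$'' flow and the ``free-time $[0,\tau_2-\tau_1]$'' flow exchanging their positions inside a switching interval (compare the two graphs in Figure~\ref{figure:Figure1}), and with the enlarged class $\cS^\star_{\dw}(\tau_1,\tau_2)$ from~\eqref{eq:EnlargedDwellTime} entering precisely where one needs a sojourn in a subsystem that is \emph{shorter} than $\tau_1$. Throughout I will use Lemma~\ref{Lemma:Equivalence}, which makes GUAS on $\cS^\star_{\dw}(\tau_1,\tau_2)$ and on $\cS_{\dw}(\tau_1,\tau_2)$ interchangeable.

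\emph{Sufficiency.} Assume continuous $W_i^\pm$ satisfying~\eqref{eq:ConditionsLowerUpper*}. By Lemma~\ref{Lemma:Equivalence} it suffices to exhibit a uniform $\cKL$ estimate along signals $\sigma\in\cS_{\dw}(\tau_1,\tau_2)$, for which \emph{every} switching interval --- the first one included --- has length in $[\tau_1,\tau_2]$. Writing $t^\sigma_k$ for the switching instants and $i_k$ for the active mode on $[t^\sigma_k,t^\sigma_{k+1})$, I build $U_\sigma:\R_+\times\R^n\to\R$ interval by interval: on $[t^\sigma_k,t^\sigma_k+\tau_1]$ I use Lemma~\ref{lem:IntermadiateLemma} together with~\eqref{eq:UpperLowerInequaility221} to pass continuously from $W_{i_k}^-$ to $W_{i_k}^+$; on the remaining piece $[t^\sigma_k+\tau_1,t^\sigma_{k+1}]$, whose length $t^\sigma_{k+1}-t^\sigma_k-\tau_1$ lies in $[0,\tau_2-\tau_1]$, I use Lemma~\ref{lem:IntermadiateLemma} together with~\eqref{eq:UpperLowerInequaility222} to pass from $W_{i_k}^+$ to $W_{i_{k+1}}^-$. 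The pieces match at $t^\sigma_k+\tau_1$ and, when the interval is strictly longer than $\tau_1$, at $t^\sigma_{k+1}$; when the interval has length exactly $\tau_1$ the value drops from $W_{i_k}^+$ to $W_{i_{k+1}}^-$ at the switching instant, which is harmless because~\eqref{eq:UpperLowerInequaility222} at $t=0$ gives $W_{i_{k+1}}^-\le W_{i_k}^+$. Composing the per-interval estimates yields $U_\sigma(t,\Phi_\sigma(t,x))\le e^{-t}U_\sigma(0,x)=e^{-t}W^-_{i_0}(x)$, and, as in Theorem~\ref{Thm:ConverseResult1}, the bounds~\eqref{eq:UpperLowerSandwich2} and the uniform estimates of Lemma~\ref{lem:IntermadiateLemma} provide $\wt\alpha_1,\wt\alpha_2\in\cK_\infty$ independent of $\sigma$ with $\wt\alpha_1(|x|)\le U_\sigma(t,x)\le\wt\alpha_2(|x|)$; hence $|\Phi_\sigma(t,x)|\le\wt\alpha_1^{-1}(e^{-t}\wt\alpha_2(|x|))$, so GUAS on $\cS_{\dw}(\tau_1,\tau_2)$, and thus on $\cS^\star_{\dw}(\tau_1,\tau_2)$, follows. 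One point that needs checking here is that the sandwich constants produced by Lemma~\ref{lem:IntermadiateLemma} for the free-time interpolation stay uniform as the interpolation time ranges over the compact set $[0,\tau_2-\tau_1]$.

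\emph{Necessity.} Assume GUAS on $\cS^\star_{\dw}(\tau_1,\tau_2)$. As in Theorem~\ref{Thm:ConverseResult1}, \cite[Proposition~7]{Son98} gives $\rho_1,\rho_2\in\cK_\infty$ with $|\Phi_\sigma(t,x)|\le\rho_1(e^{-t}\rho_2(|x|))$ for every $\sigma\in\cS^\star_{\dw}(\tau_1,\tau_2)$. Set $\cS^-_{\dw}(i):=\{\sigma\in\cS_{\dw}(\tau_1,\tau_2)\mid\sigma(0)=i\}$ and $\cS^{\star,+}(i):=\{\sigma\in\cS^\star_{\dw}(\tau_1,\tau_2)\mid\sigma(0)=i,\ t_1^\sigma\le\tau_2-\tau_1\}\cup\bigcup_{j\neq i}\cS^-_{\dw}(j)$, and define
\[
W_i^-(x):=\sup_{t\geq 0,\ \sigma\in\cS^-_{\dw}(i)} e^{t}\rho_1^{-1}(|\Phi_\sigma(t,x)|),\qquad
W_i^+(x):=\sup_{t\geq 0,\ \sigma\in\cS^{\star,+}(i)} e^{t}\rho_1^{-1}(|\Phi_\sigma(t,x)|).
\]
Inequality~\eqref{eq:UpperLowerSandwich2} holds with $\alpha_1=\rho_1^{-1}$, $\alpha_2=\rho_2$ since both index classes lie in $\cS^\star_{\dw}(\tau_1,\tau_2)$. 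For~\eqref{eq:UpperLowerInequaility221} one prepends to a signal in $\cS^{\star,+}(i)$ a sojourn in mode $i$ of length $\tau_1$; the bound $t_1^\sigma\le\tau_2-\tau_1$ (respectively, the fact that signals coming from $\cup_{j\neq i}\cS^-_{\dw}(j)$ already start with a genuine $[\tau_1,\tau_2]$ interval) ensures the merged first interval has length in $[\tau_1,\tau_2]$, so the prepended signal lies in $\cS^-_{\dw}(i)$, and the usual shift of the supremum closes the estimate. For~\eqref{eq:UpperLowerInequaility222} with $t\in(0,\tau_2-\tau_1]$ one prepends to a signal in $\cS^-_{\dw}(j)$ a sojourn in mode $i$ of length $t$; the short initial interval it creates is exactly what $\cS^\star_{\dw}(\tau_1,\tau_2)$ permits, so the prepended signal lies in $\cS^{\star,+}(i)$, and again one shifts the supremum; the endpoint $t=0$ is immediate from $\cS^-_{\dw}(j)\subset\cS^{\star,+}(i)$ for $j\neq i$. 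Continuity of $W_i^\pm$ is obtained as in~\cite[Proposition~5]{TeelPraly2000}.

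\emph{Main obstacle.} The delicate point is the asymmetric choice of the two index classes. Taking both suprema over $\cS_{\dw}(\tau_1,\tau_2)$, as for $V_i^\pm$ in Theorem~\ref{Thm:ConverseResult1}, fails: the free-time piece of length $t$ needed to establish~\eqref{eq:UpperLowerInequaility222} would create an initial interval shorter than $\tau_1$, which is not allowed in $\cS_{\dw}(\tau_1,\tau_2)$. This is exactly why the enlarged class $\cS^\star_{\dw}(\tau_1,\tau_2)$ (and the corresponding half of Lemma~\ref{Lemma:Equivalence}) is used --- $W_i^+$ must be allowed to see arbitrarily short initial sojourns in mode $i$, whereas $W_i^-$ never needs anything beyond the strict class --- and getting this asymmetry right, together with the uniformity remark in the sufficiency part, is where the argument must be handled with care.
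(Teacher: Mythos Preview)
Your proof is correct and follows the same partition-based strategy as the paper (compare the classes $\cQ^\pm(i)$ defined there), with only minor differences in packaging: you reduce the sufficiency to $\cS_{\dw}(\tau_1,\tau_2)$ via Lemma~\ref{Lemma:Equivalence} rather than sketching the construction directly on $\cS^\star_{\dw}$, and in the necessity you enlarge the index class for $W_i^+$ by adjoining $\bigcup_{j\neq i}\cS^-_{\dw}(j)$ so that the endpoint $t=0$ in~\eqref{eq:UpperLowerInequaility222} is obtained by inclusion rather than by a limit. The paper's simpler choice $\cQ^+(i)=\{\sigma\in\cS^\star_{\dw}(\tau_1,\tau_2)\mid\sigma(0)=i,\ t_1^\sigma\le\tau_2-\tau_1\}$ already works (the case $t=0$ then follows by continuity of $W_j^-$ and $\Phi_i$), but your adjunction is a legitimate and arguably cleaner way to make that endpoint explicit without altering the overall architecture.
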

\begin{proof}
The structure of the proof is depicted in Figure~\ref{figure:Figure1}.
For the sufficiency of existence of functions $W_i^*$ as in the statement, one can proceed as in proof of Theorem~\ref{Thm:ConverseResult1} constructing, for any $\sigma\in \cS_{\dw}^\star(\tau_1,\tau_2)$, a function $U_\sigma:\R_+\times \R^n\to \R$ decreasing along the solutions and whose decreasing decay and $\cK_\infty$ bounds are independent to $\sigma\in \cS_{\dw}^\star(\tau_1,\tau_2)$, and thus implying the GUAS property of system~\eqref{eq:SwitchedSystem} on $\cS_{\dw}^\star(\tau_1,\tau_2)$.\\
For the necessity, as in proof of Theorem~\ref{Thm:ConverseResult1}, we suppose that system~\eqref{eq:SwitchedSystem} is GUAS on $\cS_{\dw}^\star(\tau_1,\tau_2)$ and thus there  exist functions $\rho_1,\rho_2\in \cK_\infty$ such that
\begin{equation}\label{eq:InequalityConverseLemma2}
|\Phi_\sigma(t,x)|\leq \rho_1(e^{- t}\rho_2(|x|)),
\end{equation}
for all $\sigma\in \cS_{\dw}^\star(\tau_1,\tau_2)$, for all $x\in \R^n$ and for all $t\in \R_+$. 
Let us define
\[
\begin{aligned}
\cQ^-(i)&:=\{\sigma\in \cS^\star_{\dw}(\tau_1,\tau_2)\;\vert\;\sigma(0)=i\;\;\wedge t^\sigma_1\geq \tau_1\},\\
\cQ^+(i)&:=\{\sigma\in \cS^\star_{\dw}(\tau_1,\tau_2)\;\vert\;\sigma(0)=i\;\;\wedge\;\;t_1^\sigma\leq \tau_2-\tau_1\}.
\end{aligned}
\]
We define the function $W_i^-,W_i^+:\R^n\to \R$ by
\[
\begin{aligned}
W_i^-(x):=\sup_{t\geq 0, \,\sigma \in \cQ^-(i)}\{e^{t}\rho_1^{-1}(|\Phi_\sigma(t,x)|) \}.\\
W_i^+(x):=\sup_{t\geq 0, \,\sigma \in \cQ^+(i)}\{e^{ t}\rho_1^{-1}(|\Phi_\sigma(t,x)|)\; \}.
\end{aligned}
\]
Inequality~\eqref{eq:UpperLowerSandwich2} again follows by~\eqref{eq:InequalityConverseLemma2}.
Now consider any $\sigma\in \cQ^+(i)$, it is clear that the signal 
$\eta_{\sigma}\in \cS$ defined by
\[
\eta_{\sigma}(s)=\begin{cases}
i \;\;\;&\text{if } s< \tau_1,\\
\sigma(t-\tau_1) &\text{if } s\geq \tau_1;
\end{cases}
\]
is a signal in~$\cQ^-(i)$, and thus, for any $x\in \R^n$, we have
\[
\begin{aligned}
W^+_i(\Phi_i(\tau_1,x))&=\sup_{t\geq 0, \,\sigma \in \cQ^+(i)}\{e^{ t}\rho^{-1}(|\Phi_\sigma(t,\Phi_i(\tau_1,x))|)\; \}=\sup_{t\geq 0, \,\sigma \in \cQ^+(i)}\{e^{ t}\rho^{-1}(|\Phi_{\eta_{\sigma}}(t+\tau_1,x)|)\; \}\\
& \leq \sup_{s\geq 0, \gamma\in \cQ^-(i)}\{e^{(s-\tau_1)}\rho^{-1}(|\Phi_\gamma(s,x)|)\; \}=e^{-\tau_1}W^-_i(x).
\end{aligned}
\]
Computing similarly, for any $\tau\in [0,\tau_2-\tau_1]$ we have
\[
\begin{aligned}
W^-_j(\Phi_i(\tau,x))=\sup_{t\geq 0, \,\sigma \in \cQ^-(j)}\{e^{ t}\rho^{-1}(|\Phi_\sigma(t,\Phi_i(\tau,x))|)\; \} \leq \sup_{s\geq 0, \,\gamma \in \cQ^+(i)}\{e^{ (s-\tau)}\rho^{-1}(|\Phi_\gamma(s,x)|)\; \}=e^{-\tau}W^+_i(x).
\end{aligned}
\]
We conclude, referring to~\cite{TeelPraly2000} for the continuity of the functions $W_i^-,W_i^+$.
\end{proof}
\begin{rem}[Possible Generalizations and Numerical Verification of the Conditions]
In Theorem~\ref{Thm:ConverseResult1} and Proposition~\ref{Prop:ConverseResult2} we provided two different Lyapunov criteria, and we proved that they provide sufficient and necessary conditions for GUAS on $\cS_{\dw}(\tau_1,\tau_2)$ (and on $\cS_{\dw}^\star(\tau_1,\tau_2)$). Theorem~\ref{Thm:ConverseResult1} is based on a partition of the set $\cS_{\dw}(\tau_1,\tau_2)$, while the foundation of Proposition~\ref{Prop:ConverseResult2} is represented by a partition of the class $\cS^\star(\tau_1,\tau_2)$. These partitions have a graph-based interpretation, provided in Figure~\ref{figure:Figure1}, and our results can thus be interpreted/re-stated in the graph-based formalism of~\cite{ChiGugPro21}, in a non-linear and upper-and-lower bounds case. This idea was also inspired by related results in the context of discrete time switched systems, as the ones in~\cite{ahmadi,DebDelRos22,PhiEssDul16}. Different partitions/different graph representations can thus provide other possible sufficient (and, is some cases, necessary) Lyapunov conditions for GUAS on~$\cS_{\dw}(\tau_1,\tau_2)$; this line of research is not further explored here, and it remains open for future research. Instead,  we decided to present only the conditions in Theorem~\ref{Thm:ConverseResult1} and Proposition~\ref{Prop:ConverseResult2} since they arise from basic partitions of  $\cS_{\dw}(\tau_1,\tau_2)$ and $\cS^\star_{\dw}(\tau_1,\tau_2)$, and can thus be considered as standard cases, that one can easily generalize to other signal partitions/graph representations.

From a numerical point of view, the explicit computation/construction of functions satisfying~\eqref{eq:ConditionsLowerUpper} or~\eqref{eq:ConditionsLowerUpper*} is not straightforward, since these inequalities involve the flow maps associated to the subsystems $f_1,\dots, f_M$ and thus require the explicit computation of solutions, in a general case. 
In Appendix, in Lemmas~\ref{lemma:AppendixAuxiliary2} and~\ref{lemma:EasyImplication}, we provide results that can be helpful in relaxing these conditions, circumnavigating the explicit dependence on the subsystems solutions. In the following section, we  expand the discussion in the linear case.
\end{rem}

\section{Linear Case}\label{sec:LinearCase}
In this section we particularize the results presented in Section~\ref{sec:MainResults} in the linear case, i.e., when all the subsystems are linear. Moreover, we provide numerically tractable sufficient conditions  when restricting the search of the Lyapunov functions to the class of quadratics.
First, we present the converse Lyapunov result descending from Theorem~\ref{Thm:ConverseResult1}. We avoid, for the sake of concision, to explicitly present the ``translation'', in the linear case, of Proposition~\ref{Prop:ConverseResult2}, since this can be done as for Theorem~\ref{Thm:ConverseResult1}, \emph{mutatis mutandis}.
\begin{cor}\label{cor:LinearCase}
Consider $\cA=\{A_1,\dots, A_M\}\subset\R^{n\times n}$ and $\tau_2\geq \tau_1\geq 0$. We have that  $\rho=\rho_{\cS_\dw(\tau_1,\tau_2)}(\cA)$ is the $\cS_{\dw}(\tau_1,\tau_2)$-exponential decay rate of system~\eqref{eq:LinearSwitchedSystem}, as introduced in Definition~\ref{defn:ExponentialStability}, if and only if for all $\alpha<\rho$ there exist \emph{norms} $v_i^-, v_i^+:\R^n\to \R$, $i\in \cI$, such that
\begin{subequations}\label{eq:ConditionsLinearFirst}
\begin{align}
&v_i^+(e^{A_it}x)\leq e^{-\alpha t}v_i^-(x),\;\begin{aligned}&\forall\,i\in \cI,\,\forall t\in [0,\tau_2-\tau_1],\\&\forall\;x\in \R^n,
\end{aligned}
\label{eq:COnditionsLinearFirst1}\\\noalign{\vskip8pt}
&v_j^-(e^{A_i\tau_1}x)\leq e^{-\alpha \tau_1}v_i^+(x),\;\;\forall\,i\neq j\in \cI,\,\forall\,x\in \R^n.\label{eq:COnditionsLinearFirst2}
\end{align}
\end{subequations}
\end{cor}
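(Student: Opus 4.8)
The plan is to run the argument of Theorem~\ref{Thm:ConverseResult1} again, now keeping the exponential rate as an explicit parameter $\alpha$ and exploiting that every map involved is linear, so that the ``$\sup$''-type candidate functions from that proof become genuine \emph{norms}. I would split the statement into two implications, both relative to a fixed $\alpha$: (N) if the system is $\alpha$-UES on $\cS_{\dw}(\tau_1,\tau_2)$, then norms satisfying \eqref{eq:ConditionsLinearFirst} exist; (S) if such norms exist, then the system is $\alpha$-UES on $\cS_{\dw}(\tau_1,\tau_2)$, hence $\rho_{\cS_{\dw}(\tau_1,\tau_2)}(\cA)\ge\alpha$. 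Together (N) and (S) give $\rho_{\cS_{\dw}(\tau_1,\tau_2)}(\cA)=\sup\{\alpha:\text{norms as in \eqref{eq:ConditionsLinearFirst} exist}\}$, which yields the asserted characterization (apply (N) for every $\alpha<\rho$ when $\rho$ is the decay rate, and (S) followed by $\alpha\uparrow\rho$ for the converse).

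For (N), fix $\alpha<\rho=\rho_{\cS_{\dw}(\tau_1,\tau_2)}(\cA)$. By Definition~\ref{defn:ExponentialStability} the system is $\alpha$-UES on $\cS_{\dw}(\tau_1,\tau_2)$: there is $M\ge 1$ with $|\Phi_\sigma(t,x)|\le M e^{-\alpha t}|x|$ for all $\sigma\in\cS_{\dw}(\tau_1,\tau_2)$, $x\in\R^n$, $t\ge0$. Reusing the subfamilies $\cS^-(i),\cS^+(i)$ of \eqref{eq:SpecialClassOfSignals}, I set
\[
v_i^-(x):=\sup_{t\ge0,\ \sigma\in\cS^-(i)} e^{\alpha t}\,|\Phi_\sigma(t,x)|,\qquad v_i^+(x):=\sup_{t\ge0,\ \sigma\in\cS^+(i)} e^{\alpha t}\,|\Phi_\sigma(t,x)|,\qquad i\in\cI.
\]
Taking $t=0$ gives $v_i^{\pm}(x)\ge|x|$, and the $\alpha$-UES bound gives $v_i^{\pm}(x)\le M|x|$, so these functions are finite and positive definite; since $\Phi_\sigma(t,\cdot)$ is linear, each $x\mapsto|\Phi_\sigma(t,x)|$ is a norm, and an everywhere-finite pointwise supremum of norms is again a norm (absolute homogeneity and the triangle inequality pass to the supremum). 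In particular $v_i^{\pm}$ are automatically continuous, so the delicate regularity step of the nonlinear case disappears. Inequalities \eqref{eq:COnditionsLinearFirst1}--\eqref{eq:COnditionsLinearFirst2} then follow verbatim from the computations in the ($\Rightarrow$)-part of the proof of Theorem~\ref{Thm:ConverseResult1}: for $\sigma\in\cS^+(i)$ and $\tau\in[0,\tau_2-\tau_1]$ the signal equal to $i$ on $[0,\tau)$ and to $\sigma(\cdot-\tau)$ afterwards lies in $\cS^-(i)$, which yields $v_i^+(e^{A_i\tau}x)\le e^{-\alpha\tau}v_i^-(x)$; and for $j\ne i$, prepending to a signal of $\cS^-(j)$ a block of length $\tau_1$ of symbol $i$ produces a signal of $\cS^+(i)$, which gives $v_j^-(e^{A_i\tau_1}x)\le e^{-\alpha\tau_1}v_i^+(x)$.

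For (S), fix $\alpha$ and norms $v_i^{\pm}$ satisfying \eqref{eq:ConditionsLinearFirst}. Exactly as in the ($\Leftarrow$)-part of the proof of Theorem~\ref{Thm:ConverseResult1} — with every occurrence of the rate $1$ replaced by $\alpha$, which is harmless since \eqref{eq:COnditionsLinearFirst1}--\eqref{eq:COnditionsLinearFirst2} already carry the factor $e^{-\alpha(\cdot)}$ — for each $\sigma\in\cS_{\dw}(\tau_1,\tau_2)$ I assemble, piecewise on the switching intervals and via the interpolation device of Lemma~\ref{lem:IntermadiateLemma}, a function $U_\sigma:\R_+\times\R^n\to\R$ with $U_\sigma(t,\Phi_\sigma(t,x))\le e^{-\alpha t}U_\sigma(0,x)$. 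Since the finitely many $v_i^{\pm}$ are norms on $\R^n$, hence equivalent to $|\cdot|$, and Lemma~\ref{lem:IntermadiateLemma} preserves two-sided linear bounds, there are constants $0<c_1\le c_2$ depending only on $\{v_i^{\pm}\}$ and on $\tau_1,\tau_2$ (not on $\sigma$) with $c_1|x|\le U_\sigma(t,x)\le c_2|x|$. Chaining the two estimates gives $|\Phi_\sigma(t,x)|\le (c_2/c_1)\,e^{-\alpha t}|x|$ uniformly over $\sigma\in\cS_{\dw}(\tau_1,\tau_2)$, i.e. $\alpha$-UES, whence $\rho_{\cS_{\dw}(\tau_1,\tau_2)}(\cA)\ge\alpha$.

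The genuinely new ingredient beyond Theorem~\ref{Thm:ConverseResult1} — and the only real obstacle — is checking that the candidate functions are \emph{norms}: finiteness is exactly where $\alpha<\rho$ (through $\alpha$-UES) is used, while homogeneity and the triangle inequality must survive the supremum; on the sufficiency side one must also ensure that the interpolation of Lemma~\ref{lem:IntermadiateLemma} keeps the equivalence constants $c_1,c_2$ uniform in $\sigma$. The decay-rate bookkeeping and the time-shift manipulations are otherwise identical to the nonlinear case.
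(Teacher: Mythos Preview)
Your proposal is correct and follows essentially the same route as the paper: the same candidate norms $v_i^{\pm}(x)=\sup_{t\ge0,\,\sigma\in\cS^{\pm}(i)}e^{\alpha t}|\Phi_\sigma(t,x)|$, the same verification that they are norms via finiteness, homogeneity and the triangle inequality under the supremum, and the same appeal to the time-shift computations and the interpolation of Lemma~\ref{lem:IntermadiateLemma} from Theorem~\ref{Thm:ConverseResult1}. Your framing of the two implications (N) and (S) and the observation that norm equivalence yields linear sandwich bounds on $U_\sigma$ are slightly more explicit than the paper's, but the argument is the same.
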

\begin{proof}
The statement is a direct consequence of Theorem~\ref{Thm:ConverseResult1} and it is inspired by the construction in~\cite{Wirth2005,WirthCDC05}. The sufficiency, i.e. the fact that existence of norms as in~\eqref{eq:ConditionsLinearFirst} implies $\alpha$-UES of system~\eqref{eq:LinearSwitchedSystem}  can be proven with the same steps as in proof of Theorem~\ref{Thm:ConverseResult1}.
For the necessity, let us fix any $\alpha<\rho$, by definition of the exponential decay rate, there exists $M>0$ such that $|\Phi_\sigma(t,x)|\leq Me^{-\alpha t}|x|$, for all $\sigma\in \cS_{\dw}(\tau_1,\tau_2)$, all $x\in \R^n$ and all $t\in \R_+$. Then, we proceed as in proof of Theorem~\ref{Thm:ConverseResult1}, considering the subsets of signals $\cS^-(i), \cS^+(i)$ as in~\eqref{eq:SpecialClassOfSignals}.
We then define 
\[
\begin{aligned}
v_i^-(x)&:=\sup_{t\geq 0, \,\sigma \in \cS^-(i)}\{e^{\alpha t}|\Phi_\sigma(t,x)|\; \},\\
v_i^+(x)&:=\sup_{t\geq 0, \,\sigma \in \cS^+(i)}\{e^{\alpha t}|\Phi_\sigma(t,x)|\; \}.
\end{aligned}
\]
The properties~\eqref{eq:ConditionsLinearFirst} can be checked as in proof of Theorem~\ref{Thm:ConverseResult1}, it remains to verify that $v_i^-,v_i^+$ are norms, for every $i\in \cI$.
First of all, since $|\Phi_\sigma(t,x)|\leq Me^{-\alpha t}|x|$ we have that $|x|\leq v_i^{*}(x)\leq M|x|$ for any $x\in \R^n$ any $i\in \cI$ and any $*\in \{-,+\}$.  Then by linearity we have, given any $\lambda\in \R$, that
\[
\begin{aligned}
v_i^-(\lambda x)=\sup_{t\geq 0, \,\sigma \in \cS^-(i)}\{e^{\alpha t}|\Phi_\sigma(t,\lambda x)|\}=|\lambda|\sup_{t\geq 0, \,\sigma \in \cS^-(i)}\{e^{\alpha t}|\Phi_\sigma(t,x)|\}=|\lambda|v^-_i(x),
\end{aligned}
\]
and similarly for $v_i^+$. For the triangle inequality we have
\[
\begin{aligned}
v^-_i(x+y)&=\sup_{t\geq 0, \,\sigma \in \cS^-(i)}\{e^{\alpha t}|\Phi_\sigma(t, x+y)|\}=\sup_{t\geq 0, \,\sigma \in \cS^-(i)}\{e^{\alpha t}|\Phi_\sigma(t,x)+\Phi_\sigma(t,y)|\}\\
&\leq \sup_{t\geq 0, \,\sigma \in \cS^-(i)}\{e^{\alpha t}|\Phi_\sigma(t,x)|\} +\sup_{t\geq 0, \,\sigma \in \cS^-(i)}\{e^{\alpha t}|\Phi_\sigma(t,y)|\}= v^-_i(x)+v^-_i(y),
\end{aligned}
\]
and similarly for $v_i^+$, concluding the proof.
\end{proof}

{\color{black}
\begin{rem}
The Lyapunov characterization of UES for switched linear systems on $\cS_{\dw}(\tau_1,\tau_2)$ is the central topic of the recent article~\cite{ProKam23}. In this reference, starting from the notion of \emph{Lyapunov exponent} (corresponding to $-\rho_{\cS_\dw(\tau_1,\tau_2)}(\cA)$ in the notation of this submission), a family of $M$ norms, $v_1,\dots, v_M$, is build. Under the UES assumption, these norms, referred to as \emph{Lyapunov multinorms}, have the property that the function defined by $w_{\sigma,x}(t)=v_{\sigma(t)}(\Phi_\sigma(t,x))$ is decreasing \emph{at switching times}, for any $\sigma\in \cS_{\dw}(\tau_1,\tau_2)$ and any $x\in\R^n$. Despite the proof technique of~\cite{ProKam23} in obtaining these norms is distinct from the partition-based proof of this submission,  the following relation can be highlighted. Considering the functions $v^-_1,\dots, v^-_M$ in Corollary~\ref{cor:LinearCase}, one recovers a set of Lyapunov multinorms (as defined in~\cite{ProKam23}), while the norms $v^+_1,\dots, v^+_M$, can be considered as auxiliary norms, providing an estimation of the behavior of the solutions \emph{between switching instants}. Further connections between these two different Lyapunov converse constructions are still under investigation. We mention that the analysis in~\cite{ProKam23} is also specialized in the case of~\emph{irreducible} matrices, providing the existence of Barabanov/invariant multinorms. Moreover, based on a reduction to periodic switching signals, an algorithmic scheme to approximate these norms (by polyhedral norms) is also provided. The peculiarity of Corollary~\ref{cor:LMIConditions} with respect to the results in~\cite{ProKam23} are further analyzed in what follows, where we provide numerical techniques/schemes in order to verify the proposed conditions.
\end{rem}
}
Corollary~\ref{cor:LinearCase} can be used as a criterion for establishing exponential stability of a given linear switched systems, once the parameter $\tau_1,\tau_2$ and the matrices $A_1,\dots, A_M$ are given. Since the search/optimization over the set of norms is generally unfeasible from the numerical point of view, the search of functions satisfying conditions of Corollary~\ref{cor:LinearCase} can be restricted over particular subclasses of norms/functions.
As an example, one can consider SOS polynomia (as done in~\cite{CheCol12} for the dwell-time case), polyhedral functions (as defined in~\cite{BlaCol10}), etc.
In what follows, we specify the conditions of Corollary~\ref{cor:LinearCase} restricting the search to quadratic norms, i.e. functions of the form $w(x)=\sqrt{x^\top Qx}$ for a positive define matrix $Q\succ 0$.{\color{black}

\begin{cor}\label{cor:QuadraticConditions}
Consider $\cA=\{A_1,\dots, A_M\}\subset \R^{n\times n}$, a $\rho>0$ and $\tau_2\geq \tau_1\geq 0$. Suppose there exist $P_1^+,P_1^-,\dots, P_M^+,P_M^-\succ 0$,  such that
\begin{subequations}\label{eq:ConditionsQuadratic}
\begin{align}
e^{A_i^\top t} P_i^+ e^{A_i t}&\preceq e^{-2\rho t}P_i^-,\,\forall i\in \cI,\,\forall t\in [0,\tau_2-\tau_1]\label{eq:Cond1Quadratic}\\
e^{A_i^\top\tau_1}P_j^-e^{A_i\tau_1}&\preceq e^{-2\rho\tau_1}P_i^+,\;\forall\,i\neq j\in \cI.\label{eq:COnd2Quad}
\end{align}
\end{subequations}
Then system~\eqref{eq:LinearSwitchedSystem} is $\rho$-UES on $\cS_{\dw}(\tau_1,\tau_2)$.
\end{cor}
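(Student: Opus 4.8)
The plan is to read the matrix inequalities in \eqref{eq:ConditionsQuadratic} as the specialization of the norm conditions \eqref{eq:ConditionsLinearFirst} of Corollary~\ref{cor:LinearCase} to \emph{quadratic} norms, and then to invoke the sufficiency direction of that corollary. Concretely, for every $i\in\cI$ I would set $v_i^\pm(x):=\sqrt{x^\top P_i^\pm x}$. Since each $P_i^\pm\succ0$, the function $v_i^\pm$ is a genuine norm on $\R^n$ (it is the Euclidean norm read in the coordinates $y=(P_i^\pm)^{1/2}x$, hence positive definite, absolutely homogeneous, subadditive and continuous), and there are constants $0<c_1\le c_2$ with $c_1|x|\le v_i^\pm(x)\le c_2|x|$ for all $x\in\R^n$. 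So these are legitimate candidates for the norms appearing in Corollary~\ref{cor:LinearCase}.

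Next I would translate the LMIs into the norm inequalities. Fix $i\in\cI$ and $t\in[0,\tau_2-\tau_1]$. Congruence-transforming \eqref{eq:Cond1Quadratic} by an arbitrary $x\in\R^n$ (i.e. pre- and post-multiplying by $x^\top$ and $x$) gives $(e^{A_it}x)^\top P_i^+(e^{A_it}x)\le e^{-2\rho t}\,x^\top P_i^-x$, and applying the increasing map $s\mapsto\sqrt{s}$ yields $v_i^+(e^{A_it}x)\le e^{-\rho t}v_i^-(x)$, which is exactly \eqref{eq:COnditionsLinearFirst1} with $\alpha=\rho$. An identical congruence applied to \eqref{eq:COnd2Quad} (at $t=\tau_1$) gives $v_j^-(e^{A_i\tau_1}x)\le e^{-\rho\tau_1}v_i^+(x)$ for all $i\neq j\in\cI$, which is \eqref{eq:COnditionsLinearFirst2} with $\alpha=\rho$. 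The factor $2$ in the exponents of \eqref{eq:ConditionsQuadratic} disappears precisely because the quadratic norm is the square root of the quadratic form.

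Finally, having produced norms $\{v_i^\pm\}_{i\in\cI}$ satisfying \eqref{eq:ConditionsLinearFirst} with parameter $\alpha=\rho$, I would conclude by the sufficiency part of Corollary~\ref{cor:LinearCase}. Equivalently, one can rerun the $(\Leftarrow)$ construction of Theorem~\ref{Thm:ConverseResult1} with $V_i^\pm:=v_i^\pm$ and exponential decay $e^{-\rho t}$: this builds, for each $\sigma\in\cS_{\dw}(\tau_1,\tau_2)$, a function $U_\sigma:\R_+\times\R^n\to\R$ with $U_\sigma(t,\Phi_\sigma(t,x))\le e^{-\rho t}U_\sigma(0,x)$ and $c_1|x|\le U_\sigma(t,x)\le c_2|x|$ uniformly in $\sigma$, whence $|\Phi_\sigma(t,x)|\le (c_2/c_1)e^{-\rho t}|x|$. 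Thus system~\eqref{eq:LinearSwitchedSystem} is $\rho$-UES on $\cS_{\dw}(\tau_1,\tau_2)$.

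There is essentially no obstacle here: the statement is a direct quadratic instantiation of Corollary~\ref{cor:LinearCase}, and the proof is a two-line congruence-plus-square-root argument followed by a citation. The only points that require a little care are the bookkeeping of the exponential factors across the square root (so that $e^{-2\rho t}$ in the matrix inequality becomes $e^{-\rho t}$ in the norm inequality) and the harmless remark that, because $\rho>0$ and all switching-interval lengths are nonnegative, the same norms also satisfy \eqref{eq:ConditionsLinearFirst} for every $\alpha\le\rho$, so the conclusion can also be phrased through the decay-rate characterization of Corollary~\ref{cor:LinearCase}.
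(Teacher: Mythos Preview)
Your proposal is correct and follows exactly the paper's approach: define $v_i^\pm(x):=\sqrt{x^\top P_i^\pm x}$ and invoke the sufficiency direction of Corollary~\ref{cor:LinearCase}. The additional details you supply (congruence by $x$, square-root bookkeeping, and the remark on $\alpha\le\rho$) are all sound and merely expand what the paper compresses into one line.
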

\begin{proof}
The proof follows by Corollary~\ref{cor:LinearCase}, by defining $v_i^+(x):=\sqrt{x^\top P_i^+x}$, $v_i^-(x):=\sqrt{x^\top P_i^-x}$ for any $i\in \cI$.
\end{proof}
In Corollary~\ref{cor:QuadraticConditions} we have rewritten the conditions of Corollary~\ref{cor:LinearCase} in the case of \emph{quadratic norms}. On the other hand, these conditions still have the weakness of explicitly depending on the exponential matrices of the subsystems. Several possible relaxations/manipulations are possible in order to transforms~\eqref{eq:Cond1Quadratic}~\eqref{eq:COnd2Quad} into more treatable conditions. Among others, we mention the differential linear matrix inequality framework,~\cite[Chapter 2]{Geromel23}, or the discretizations techniques proposed in~\cite{Xiang15}. In what follows we illustrate a possible numerical scheme in order to verify the conditions of Corollary~\ref{cor:QuadraticConditions}.
}
{\color{black}
\begin{cor}\label{cor:LMIConditions}
Consider $\cA=\{A_1,\dots, A_M\}\subset \R^{n\times n}$, a $\rho>0$ and $\tau_2\geq \tau_1\geq 0$. Suppose there exist $P_1^+,P_1^-,\dots, P_M^+,P_M^-\succ 0$, $\mu\in (0,1) $ and $\nu\in \R$ such that the following inequalities
\begin{subequations}\label{eq:ConditionsLMI}
\begin{align}
P_i^+\preceq \mu^2 P_i^-,\;\;\;\;&\forall\,i\in \cI,\label{eq:LMI2}\\
A_i^\top P_i^+ + P_i^+ A_i\preceq 2\nu P_i^+,\;\;\;\;&\forall\,i\in \cI,\label{eq:LMI3}\\
\log(\mu)+(\tau_2-\tau_1)(\nu+\rho)\leq 0,\;\;\;\;\;\;\;\;&\label{eq:LMI4}
\end{align}
and condition~\eqref{eq:COnd2Quad} are satisfied.
\end{subequations}
Then system~\eqref{eq:LinearSwitchedSystem} is $\rho$-UES on $\cS_{\dw}(\tau_1,\tau_2)$.\\
Moreover, condition~\eqref{eq:COnd2Quad} can be replaced by the following statement: given a $K\in \N$, for every $i\neq j\in \cI$ there exist $Q_{ij,0},\dots, Q_{ij,K}\succ 0$ such that
\begin{equation}\label{eq:KRelaxation}
\begin{aligned}
Q_{ij,0}=P_i^+,&\;\;\;\;\;\;Q_{ij,K}=P_j^-,\\
A_i^\top Q_{ij,k}+Q_{ij,k}A_i&+\frac{K(Q_{ij,k}-Q_{ij,k-1})}{\tau_1}\prec 2\rho Q_{ij,k},\\
A_i^\top Q_{ij,k-1}+Q_{ij,k-1}A_i&+\frac{K(Q_{ij,k}-Q_{ij,k-1})}{\tau_1}\prec 2\rho Q_{ij,k-1}.
\end{aligned}
\end{equation}
\end{cor}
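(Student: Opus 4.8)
The plan is to show that the hypotheses force the conditions of Corollary~\ref{cor:QuadraticConditions}, and, for the ``moreover'' part, to first establish that the $K$-step inequalities~\eqref{eq:KRelaxation} imply~\eqref{eq:COnd2Quad}, which then reduces everything to the previous case. Since~\eqref{eq:COnd2Quad} is assumed outright in the first part, I only need to produce~\eqref{eq:Cond1Quadratic}, i.e.\ $e^{A_i^\top t}P_i^+e^{A_it}\preceq e^{-2\rho t}P_i^-$ on $[0,\tau_2-\tau_1]$. Fixing $i\in\cI$, I would look at $G(t):=e^{-2\nu t}e^{A_i^\top t}P_i^+e^{A_it}$ and differentiate; using~\eqref{eq:LMI3},
\[
\dot G(t)=e^{-2\nu t}e^{A_i^\top t}(A_i^\top P_i^++P_i^+A_i-2\nu P_i^+)e^{A_it}\preceq 0,
\]
because congruence by $e^{A_it}$ and scaling by the positive number $e^{-2\nu t}$ preserve the Loewner order. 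A symmetric-matrix-valued $C^1$ map with negative semidefinite derivative is non-increasing in the Loewner order (test $v^\top G(t)v$ against a fixed $v$, reducing to scalar monotonicity), so $G(t)\preceq G(0)=P_i^+$, that is $e^{A_i^\top t}P_i^+e^{A_it}\preceq e^{2\nu t}P_i^+$ for all $t\ge 0$. Inserting~\eqref{eq:LMI2} gives $e^{A_i^\top t}P_i^+e^{A_it}\preceq e^{2\nu t}\mu^2 P_i^-$, so it remains to check $e^{2\nu t}\mu^2\le e^{-2\rho t}$, i.e.\ $\log\mu+(\nu+\rho)t\le 0$, on $[0,\tau_2-\tau_1]$. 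The left-hand side is affine in $t$, hence maximized over the interval at an endpoint: at $t=0$ it equals $\log\mu\le 0$ (since $\mu\in(0,1)$), and at $t=\tau_2-\tau_1$ it is $\le 0$ by~\eqref{eq:LMI4}. Thus~\eqref{eq:Cond1Quadratic} holds, and Corollary~\ref{cor:QuadraticConditions} yields the claimed $\rho$-UES on $\cS_{\dw}(\tau_1,\tau_2)$.

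For the ``moreover'' part I would prove that~\eqref{eq:KRelaxation} implies~\eqref{eq:COnd2Quad}. Fix $i\neq j$ and let $\tilde Q:[0,\tau_1]\to\R^{n\times n}$ be the continuous piecewise-affine interpolant of $Q_{ij,0},\dots,Q_{ij,K}$ at the uniform nodes $t_k:=k\tau_1/K$; on each subinterval $[t_{k-1},t_k]$ one has $\tilde Q(t)=(1-\lambda)Q_{ij,k-1}+\lambda Q_{ij,k}$ with $\lambda=\lambda(t)\in[0,1]$, and $\dot{\tilde Q}(t)$ is constant and equal to $K(Q_{ij,k}-Q_{ij,k-1})/\tau_1$. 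Taking the convex combination with weights $1-\lambda$ and $\lambda$ of the two inequalities in~\eqref{eq:KRelaxation} produces, on each subinterval, a pointwise matrix inequality linking $\tilde Q(t)$, $\dot{\tilde Q}(t)$ and $A_i$ which is precisely the differential (infinitesimal) form of~\eqref{eq:COnd2Quad}. I would then integrate this along the flow of the $i$-th subsystem: for arbitrary $x\in\R^n$ set $y(t):=e^{A_it}x$ and $p(t):=y(t)^\top\tilde Q(t)y(t)$; the matrix inequality becomes a pointwise bound on $\dot p(t)$, and since $p$ is continuous on $[0,\tau_1]$ and $\tilde Q$ is piecewise $C^1$, a Grönwall/comparison argument between $t=0$ and $t=\tau_1$ gives $y(\tau_1)^\top Q_{ij,K}\,y(\tau_1)\le e^{-2\rho\tau_1}\,y(0)^\top Q_{ij,0}\,y(0)$. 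As $Q_{ij,0}=P_i^+$, $Q_{ij,K}=P_j^-$ and $x$ is arbitrary, this is exactly~\eqref{eq:COnd2Quad}, and the first part concludes.

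There is no deep obstacle here. The only points requiring some care are: the matrix comparison step — passing from a differential matrix inequality to a Loewner-order bound along the flow — which is handled by testing against fixed vectors so that it reduces to scalar monotonicity/Grönwall; the fact that $\tilde Q$ is only piecewise $C^1$, so the relevant scalar functions are continuous and piecewise $C^1$ with a controlled one-sided derivative (still enough for the comparison); and the elementary endpoint analysis of the affine function appearing in~\eqref{eq:LMI4}. All remaining steps are direct substitutions.
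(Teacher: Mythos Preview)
Your proposal is correct and follows essentially the same route as the paper: the paper's proof merely invokes Lemma~\ref{lemma:AppendixAuxiliary2} (for the implication \eqref{eq:LMI2}--\eqref{eq:LMI4} $\Rightarrow$ \eqref{eq:Cond1Quadratic}) and Lemma~\ref{lemma:EasyImplicationLinear} (for \eqref{eq:KRelaxation} $\Rightarrow$ \eqref{eq:COnd2Quad}), and what you have written is precisely an inline reproof of those two lemmas---the differential/comparison argument for the first, and the piecewise-affine interpolation of the $Q_{ij,k}$ together with Gr\"onwall for the second.
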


\begin{proof}
 Conditions~\eqref{eq:LMI2},~\eqref{eq:LMI3},~\eqref{eq:LMI4} imply condition~\eqref{eq:Cond1Quadratic} by~Lemma~\ref{lemma:AppendixAuxiliary2} in Appendix. The fact that~\eqref{eq:KRelaxation} implies~\eqref{eq:COnd2Quad} is recalled in Lemma~\ref{lemma:EasyImplicationLinear} in Appendix, see also~\cite[Theorem 2.5]{Geromel23}.
\end{proof}
}
{\color{black}
\begin{rem}[Robustness Issues] 
Condition~\eqref{eq:COnd2Quad} requires to compute the matrix exponentials $e^{A_i\tau_1}$ for any $i\in \cI$.  Instead, the conditions in~\eqref{eq:KRelaxation} (given an arbitrary $K\in \N$) are linear in $A_i$, $i\in \cI$, as~\eqref{eq:LMI3}, and they can thus be adapted also to the case where the matrices $A_1,\dots, A_N$ are affected by bounded noises/are (partially) unknown. Since in general the function $A \mapsto e^{A}$ is non-convex with respect to the components of $A$, this is not the case for~\eqref{eq:COnd2Quad}. Conditions~\eqref{eq:KRelaxation} require to fix a ``relaxation'' parameter $K\in \N$ and to increase the number of semidefinite decision variables and inequalities. We note that the equivalence between~\eqref{eq:COnd2Quad} and~\eqref{eq:KRelaxation} is reached only for arbitrarily large $K\in \N$, and thus this technique, once fixed a $K\in \N$ introduces, in general, conservatism to the proposed stability criteria. For more discussion on this topic, we refer to~\cite{AllSha11,Xiangxiao14,Xiang15} and the recent monograph~\cite{Geromel23}. 
\end{rem}}

\subsection{Numerical Examples}
\textcolor{black}{
We first modify a celebrated example taken from~\cite[Pag.19, Example~3.1]{Lib03}, and we present a switched linear system that exhibits stability for signals in the class of \emph{fixed time switching} of the form
\[
\cS_{\text{fix}}(\tau)=\cS_{\dw}(\tau,\tau)=\{\sigma \in \cS\;\vert\; t^\sigma_k-t^\sigma_{k-1}=\tau, \;\forall k\in \N \},
\] for a certain $\tau>0$. 
On the other hand, we show that the system is \emph{unstable} on the class of dwell-time signals $\overline \cS_{\dw}(\tau)$ defined in~\eqref{Eq:PuerlyDwellTime}. For a given perturbation parameter $\delta>0$ we show that the system is exponentially stable on the class $\cS_{\dw}(\tau,\tau+\delta)$, using the conditions of Corollary~\ref{cor:LMIConditions}.
This $\delta>0$ can be considered as an upper bound on the possible delays (in the switching schedule) that do not destabilize the system.}

\begin{example}{\emph{(Delay In Switching):}}\label{ex:Liberzon}
Consider
\begin{equation*}
 A_1:=\begin{pmatrix}-\varepsilon &-1\\   4&-\varepsilon\end{pmatrix}\;\;\;\text{and }\;A_2:=\begin{pmatrix}-\varepsilon &-4\\   1&-\varepsilon\end{pmatrix}, 
\end{equation*}
with $0<\varepsilon<1$ a fixed parameter. The matrices $A_1$ and $A_2$ are Hurwitz.
Consider the switched linear system
\begin{equation}\label{eq:LinearSwitchingExample}
\dot x=A_{\sigma(t)}x,
\end{equation}
where $\sigma:\R_+\to \{1,2\}$ lies in $\cS$. Roughly speaking, the trajectories of the subsystems $\dot x=A_1x$ and $\dot x=A_2x$ are elliptically converging spirals with vertical and horizontal major axis respectively, as sketched in Figure~\ref{fig:Liberzon}.
Let us call $\mathcal{E}=\{e_1,e_2\}$ the canonical basis, we define
\[
\overline{\tau}:=\min\{t>0\;\vert\; e_2^\top e^{A_1t}e_1=0\}.
\]
that is the time at which the solution $\Phi_1(t,e_1)$  reach again the line spanned by $e_1$. By linearity of $A_1$, this is the time that \emph{each solution} implies to reach again the line spanned by the initial condition, or equivalently, the time that each solution implies to span an angle of $\pi$ around the origin. It turns out that this time can be explicitly computed (by writing explicitly the exponential matrix $e^{A_1t}$), and we have $\overline \tau=\frac{\pi}{2}$, we avoid the computations here. Since $A_2$ is an orthogonal transformation of $A_1$ ( $A_2=Q^\top A_1Q$ with $Q=\begin{psmallmatrix}0 &1\\   -1&0\end{psmallmatrix}$)  the same hold for solutions of $\dot x=A_2 x$. 
Moreover, from a straightforward computation we have that \textcolor{black}{the matrix
$e^{A_2\overline \tau}e^{A_1\overline  \tau}$ is Schur-stable,
implying that the switched system~\eqref{eq:LinearSwitchingExample} is UES on $\cS_{\text{fix}}(\overline \tau)$.}
On the other hand, we note that~\eqref{eq:LinearSwitchingExample} is not GUAS on $\overline \cS_{\text{dw}}(\overline \tau)$. It suffices to choose 
$\tau_1:=\min\{t\geq\overline \tau\;\vert\;e_1^\top e^{A_1t}e_1=0\}$,
$\tau_2:=\min\{t\geq\overline \tau\;\vert\;e_2^\top e^{A_2t}e_2=0\}$
and the periodic (with period $\tau_1+\tau_2$) switching signal 
\begin{equation}\label{eq:DivergingSwitchingrule}
\sigma(t)=\begin{cases}
1,\;\;\text{if} \;\;t\in [0,\tau_1),\\
2,\;\;\text{if} \;\;t\in [\tau_1,\tau_1+\tau_2),
\end{cases}
\end{equation}
to show instability for $\varepsilon$ small enough, see Figure~\ref{fig:Liberzon} for an example of diverging trajectory.
\begin{figure}
\begin{center}
\includegraphics[scale=0.7]{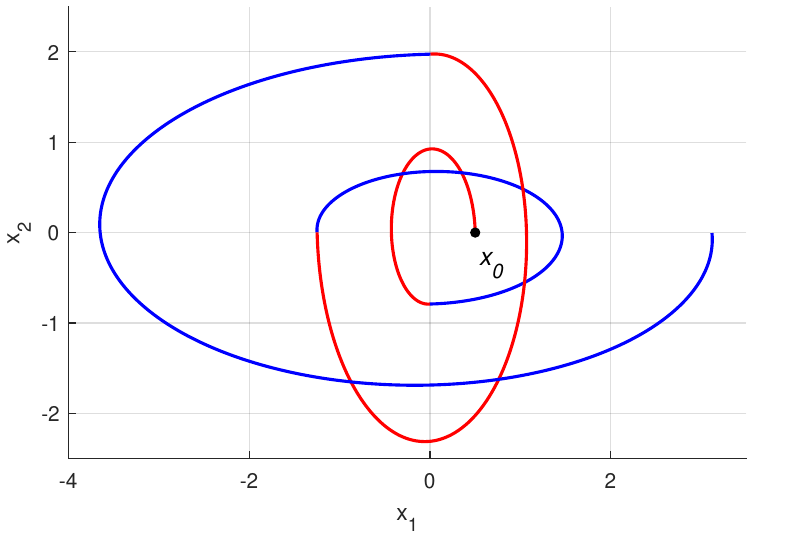}
\caption{Example of diverging trajectory of~\eqref{eq:LinearSwitchingExample} (with $\varepsilon=0.1$), starting at $x_0=[0.5,0]^\top$, under the switching rule defined in~\eqref{eq:DivergingSwitchingrule}. Red color stands for the subsystem $\dot x=A_1x$, blue for $\dot x=A_2x$.}  
\label{fig:Liberzon}
\end{center}
\end{figure}
In what follows, we fix $\varepsilon=0.1$, and we consider classes of the form $\cS_{\dw}(\overline \tau,\overline \tau+\delta)$ for  parameters $\delta>0$. Intuitively, we suppose that the switched system, which we proved UES on $\cS_{\text{fix}}(\overline \tau)$, now is possibly affected by unpredictable delays on the switching instants, and these delays are upper bounded by $\delta$. Using the conditions of Corollary~\ref{cor:LMIConditions}, we want to estimate an upper bound for the $\delta>0$ for which the stability of the system is ``preserved'', considering signals in $\cS_{\dw}(\overline \tau,\overline \tau+\delta)$. The destabilizing signal designed in~\eqref{eq:DivergingSwitchingrule} already tells us that this $\delta$ cannot be unbounded, since the system is unstable on $\overline \cS_{\dw}(\overline \tau)=\lim_{\delta\to +\infty}\cS_{\dw}(\overline \tau,\overline \tau+\delta)$. In  order to apply the conditions of~Corollary~\ref{cor:LMIConditions} we fix $\rho=0.001$, $\nu=1.5$ and we explicitly compute $e^{A_1\overline \tau}$ and $e^{A_2\overline \tau}$. Then,  we solve the LMIs
\[
\begin{aligned}
P_i^+&\preceq \mu^2 P_i^-,\;\;\;\;\forall\,i\in \{1,2\},\\
A_i^\top P_i^+ + P_i^+ A_i&\preceq 2\nu P_i^+,\;\;\;\;\forall\,i\in \{1,2\},\\
e^{A_i^\top\overline\tau}P_j^-e^{A_i\overline\tau}&\preceq e^{-2\rho\overline \tau}P_i^+,\;\;\;\;\forall\,i\neq j\in \{1,2\},
\end{aligned}
\]
minimizing, via line search, the parameter $\mu\in (0,1)$. The minimal value of $\mu$ for which the LMIs are feasible we were able to find is~$\widehat\mu=0.86$. Using condition~\eqref{eq:LMI4} in Corollary~\ref{cor:LMIConditions} we have that the system is $\rho$-UES on $\cS_{\dw}(\overline \tau,\overline \tau+\delta)$ if 
\[
\delta\leq-\frac{\ln(\widehat\mu)}{\nu+\rho}=-\frac{\ln(0.86)}{1.501}\approx 0.1.
\]
Thus, we have proven that for signals $\sigma\in \cS$ that switch every $\overline \tau$ units of time with a possible delay bounded from above by $\delta=0.1$, the exponential stability of~\eqref{eq:LinearSwitchingExample} is preserved.
\end{example}
We now borrow an example already considered in the literature, and we highlight how the conditions of Corollary~\ref{cor:LMIConditions} can provide less conservative results, in some cases.
\begin{example}{\emph{(Unstable subsystems)},~\cite[Section 5]{Xiangxiao14}}\\
Let us consider the matrices 
\begin{equation*}
 A_1:=\begin{pmatrix}-1.9 &0.6\\   0.6&-0.1\end{pmatrix}\;\;\;\text{and }\;A_2:=\begin{pmatrix}0.1 &-0.9\\   0.1&-1.4\end{pmatrix}, 
\end{equation*}
which are both Hurwitz unstable, i.e. they both have at least one eigenvalue with positive real part.
In~\cite[Section 5]{Xiangxiao14}  the system is studied, and using a different multiple quadratic Lyapunov function technique, values $0\leq\tau_1\leq \tau_2$ for which the corresponding switched linear system is stable on $\cS_{\dw}(\tau_1,\tau_2)$ are provided. It can be seen that, numerically, conditions of Corollary~\ref{cor:LMIConditions} are able to recover values consistent with the analysis performed in~\cite{Xiangxiao14}. Moreover, fixing $\mu=0.65$, $\nu=0.25$, $\rho=0.001$, we are able to check the feasibility of the conditions for $\tau_1=0.6$ (here considered as a parameter to minimize in solving~\eqref{eq:LMI2}~\eqref{eq:LMI3}~\eqref{eq:COnd2Quad}). This implies that the system is $\rho$-UES
on $\cS_{\dw}(\tau_1,\tau_2)$ with 
\[
\tau_2\leq -\frac{\log(\mu)}{\nu+\rho}+\tau_1\approx -\frac{0.4308}{0.251}+0.6\approx2.3163.
\]
These ``stabilizing'' values of $\tau_1$ and $\tau_2$ were not found with the techniques of~\cite{Xiangxiao14} (see Section 5 of the mentioned paper), which, we underline, are based on a splitting procedure similar to  the one used in~\eqref{eq:KRelaxation}.
\end{example} 

\section{Conclusion}\label{Sec:conclu}
In conclusion, this paper has provided a comprehensive exploration of the stability analysis of switched systems, considering signals with upper and lower \textcolor{black}{bounds on the distance between switching instants.} By adapting and extending the multiple Lyapunov functions approach, we provided a thorough characterization of uniform stability, both in the nonlinear and linear case. For the linear case, numerical schemes have been proposed in order to check the Lyapunov sufficient conditions.
As open route for future research, we expect to generalize the approach of this manuscript to broader classes of signals. Moreover, we expect to apply the current analysis to hybrid systems which exhibit upper and lower bounds on the time of occurrence of jump events.

\section*{Acknowledgements}
The author would like to thank Aneel Tanwani, Thiago Alves Lima and Lucas N. Egidio for the useful suggestions and the fruitful discussions.

\appendix
\section{Technical Lemmas}
In this Appendix we collect some technical statements in order to handle inequalities of the form~\eqref{eq:UpperLowerInequaility1}~\eqref{eq:UpperLowerInequaility2} (or, equivalently~\eqref{eq:UpperLowerInequaility221}~\eqref{eq:UpperLowerInequaility222}). In what follows, given a vector field $f:\R^n\to \R^n$ satisfying Assumption~\ref{assumt:Regularity}, $\Phi_f:\R_+\times \R^n\to \R^n$ denotes the corresponding solution/flow map.

 \begin{lemma}\label{lem:IntermadiateLemma}
Consider a vector field $f:\R^n\to \R^n$ satisfying the hypothesis of Assumption~\ref{assumt:Regularity}, and two continuous function $V_b,V_a:\R^n\to \R_+$ such that
\[
V_b(\Phi_f(\overline \tau,x))\leq e^{-\overline\tau}V_a(x),\;\;\forall x\in \R^n,
\]
for a given $\overline \tau>0$. Then, there exists a continuous function $U:[0,\overline\tau]\times \R^n\to \R$, such that
\begin{subequations}\label{eq:propertyFunctionW}
\begin{align}
&U(0,x)=V_a(x)\;\wedge\; U(\overline \tau,x)=V_b(x),\;\forall \,x\in \R^n,\label{eq:propertyFunctionW1}\\
&U(t,\Phi_f(t,x))\leq e^{-t}U(0,x),\;\forall\,(t,x)\in[0,\overline \tau]\times \R^n.\label{eq:propertyFunctionW3}
\end{align}
\end{subequations}
Moreover, if for some $\alpha_1,\alpha_2\in \cK_\infty$ it holds that $\alpha_1(|x|)\leq V_s(x)\leq \alpha_2(|x|)$ for all $x\in \R^n$ and $s\in \{a,b\}$, then there exist $\wt \alpha_1,\wt\alpha_2\in \cK_\infty$ such that \[
\wt \alpha_1(|x|)\leq \alpha_1(|x|)\leq U(t,x)\leq \alpha_2(|x|)\leq \wt \alpha_2(|x|),
\]
for all $(t,x)\in[0,\overline \tau]\times \R^n$.
 \end{lemma}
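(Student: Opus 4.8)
The plan is to obtain $U$ by interpolating, \emph{along the flow of $f$}, between $V_a$ placed on the time slice $\{t=0\}$ and $V_b$ placed on the time slice $\{t=\overline\tau\}$. Concretely, I would set, for $(t,x)\in[0,\overline\tau]\times\R^n$,
\[
U(t,x):=\Bigl(1-\tfrac{t}{\overline\tau}\Bigr)e^{-t}\,V_a(\Phi_f(-t,x))+\tfrac{t}{\overline\tau}\,e^{\overline\tau-t}\,V_b(\Phi_f(\overline\tau-t,x)),
\]
equivalently $U(t,\Phi_f(t,y))=(1-\tfrac{t}{\overline\tau})e^{-t}V_a(y)+\tfrac{t}{\overline\tau}e^{\overline\tau-t}V_b(\Phi_f(\overline\tau,y))$ for $y\in\R^n$ (a pointwise maximum of the two summands would serve equally well). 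This is well defined since $\overline\tau>0$ and, the flow being complete, every point is of the form $\Phi_f(t,y)$. Continuity of $U$ on $[0,\overline\tau]\times\R^n$ is immediate from the joint continuity of $(s,x)\mapsto\Phi_f(s,x)$ together with that of $V_a,V_b$.

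Next I would verify the boundary conditions \eqref{eq:propertyFunctionW1}: at $t=0$ the second summand vanishes and the first equals $V_a(x)$, while at $t=\overline\tau$ the first summand vanishes and the second equals $V_b(\Phi_f(0,x))=V_b(x)$; no use of the hypothesis is needed here. For the decay property \eqref{eq:propertyFunctionW3}, I evaluate along a trajectory: for $x_0\in\R^n$ and $t\in[0,\overline\tau]$,
\[
U(t,\Phi_f(t,x_0))=\Bigl(1-\tfrac{t}{\overline\tau}\Bigr)e^{-t}V_a(x_0)+\tfrac{t}{\overline\tau}e^{\overline\tau-t}V_b(\Phi_f(\overline\tau,x_0)).
\]
Multiplying the standing inequality $V_b(\Phi_f(\overline\tau,x_0))\le e^{-\overline\tau}V_a(x_0)$ by the nonnegative factor $\tfrac{t}{\overline\tau}e^{\overline\tau-t}$ bounds the second summand by $\tfrac{t}{\overline\tau}e^{-t}V_a(x_0)$, and adding the first summand gives $U(t,\Phi_f(t,x_0))\le e^{-t}V_a(x_0)=e^{-t}U(0,x_0)$, which is exactly \eqref{eq:propertyFunctionW3}. (In fact $t\mapsto e^{t}U(t,\Phi_f(t,x_0))$ is affine, decreasing from $V_a(x_0)$ down to $e^{\overline\tau}V_b(\Phi_f(\overline\tau,x_0))$.)

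For the ``moreover'' part the key ingredient is a uniform estimate on the flow over the compact time window $[-\overline\tau,\overline\tau]$: since $f$ is locally Lipschitz, complete and $f(0)=0$ (so $\Phi_f(s,0)=0$ for all $s$), a compactness argument — image of $\{|x|\le r\}\times[-\overline\tau,\overline\tau]$ under the continuous map $\Phi_f$ is bounded, and it shrinks to $\{0\}$ as $r\to0^+$, with the lower bound obtained by inverting the flow — produces $\kappa_1,\kappa_2\in\cK_\infty$ with $\kappa_1(|x|)\le|\Phi_f(s,x)|\le\kappa_2(|x|)$ for all $|s|\le\overline\tau$. Since for $t\in[0,\overline\tau]$ both $-t$ and $\overline\tau-t$ lie in $[-\overline\tau,\overline\tau]$, substituting $\alpha_1(|\cdot|)\le V_a,V_b\le\alpha_2(|\cdot|)$ and these flow bounds into the definition of $U$, together with $e^{-\overline\tau}\le e^{-t}\le1\le e^{\overline\tau-t}\le e^{\overline\tau}$, yields
\[
e^{-\overline\tau}\alpha_1(\kappa_1(|x|))\ \le\ U(t,x)\ \le\ e^{\overline\tau}\alpha_2(\kappa_2(|x|)),\qquad(t,x)\in[0,\overline\tau]\times\R^n,
\]
and both bounding functions are of class $\cK_\infty$; one then gets the asserted $\wt\alpha_1,\wt\alpha_2$ by, if needed, taking a minimum with $\alpha_1$ and a maximum with $\alpha_2$.

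I expect the last step — the uniform $\cK_\infty$ estimate on the flow over $[-\overline\tau,\overline\tau]$ — to be the only non-routine point; it is a standard fact, but it is where the argument actually uses Assumption~\ref{assumt:Regularity} in full. I would also note that the bounds on $U$ are naturally of the form ``$\alpha_i$ composed with the flow estimates'' rather than $\alpha_1,\alpha_2$ themselves — i.e. the statement is really ``$U$ admits class-$\cK_\infty$ bounds'', which is all that is invoked in the proofs of Theorem~\ref{Thm:ConverseResult1} and Proposition~\ref{Prop:ConverseResult2} — because \eqref{eq:propertyFunctionW3} forces $U(t,\Phi_f(t,x_0))\le e^{-t}V_a(x_0)$ while $|\Phi_f(t,x_0)|$ may well exceed $|x_0|$.
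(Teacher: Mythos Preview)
Your proof is correct and follows essentially the same approach as the paper: both construct $U$ as a convex interpolation along the flow between $e^{-t}V_a(\Phi_f(-t,x))$ and $V_b(\Phi_f(\overline\tau-t,x))$, the only cosmetic differences being that the paper allows a general monotone weight $\rho(t)$ in place of your $t/\overline\tau$ and omits your extra factor $e^{\overline\tau-t}$ on the second summand. Your closing observation that the literal inner bounds $\alpha_1(|x|)\le U(t,x)\le\alpha_2(|x|)$ cannot hold in general (only the outer $\wt\alpha_i$ bounds are obtainable, and only those are invoked in Theorem~\ref{Thm:ConverseResult1}) is well taken; the paper defers this part of the argument to an external reference and does not address the point.
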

 \begin{proof}
The idea behind the proof is inspired by~\cite[Lemma 1]{DelPaq22}.
Let us define the auxiliary functions $Z_1,Z_2:[0,\overline \tau]\times \R^n\to\R$ by
\[
Z_1(t,x):=e^{-t}V_a(\Phi_f(-t,x)),\;\;Z_2(t,x):=V_b(\Phi_f({\color{black}\overline \tau}-t,x)).
\]
Let us consider any continuous and strictly increasing function $\rho:[0,\overline \tau]\to [0,1]$ such that $\rho(0)=0$ and $\rho(\overline \tau)=1$, and define 
\[
U(t,x):=\rho(t)Z_2(t,x)+(1-\rho(t))Z_1(t,x).
\]
It is easy to see that~\eqref{eq:propertyFunctionW1} holds; computing we have
\[
\begin{aligned}
U(t,\Phi_f(t,x))&=\rho(t)Z_2(t,\Phi_f(t,x))+(1-\rho(t))Z_1(t,\Phi_f(t,x))=\rho(t)V_b(\Phi_f(\overline \tau,x))+(1-\rho(t))(e^{-t}V_a(x))\\
&=e^{-t}V_a(x)+\rho(t)(V_b(\Phi_f(\overline\tau,x))-e^{-t}V_a(x))\leq e^{-t}V_a(x)+\rho(t)((e^{-\overline \tau}-e^{-t})V_a(x)))\\&\leq e^{-t}V_a(x).
\end{aligned}
\]
The last statement concerning the $\cK_\infty$ bounds follows by the continuous dependence of solutions of $\dot x=f(x)$ on initial conditions (see~\cite[Theorem 3.4]{khalil2002nonlinear}), we refer to~\cite[Lemma 1]{DelPaq22} for the complete argument.
 \end{proof}
We now present conditions that allow us to verify~\eqref{eq:UpperLowerInequaility1} (and, equivalently)~\eqref{eq:UpperLowerInequaility222} without computing explicitly the solutions of the subsystems.

\begin{lemma}\label{lemma:AppendixAuxiliary2}
Consider two locally Lipschitz functions $V_a,V_b:\R^n \to \R$ and a vector field $f:\R^n\to \R^n$ satisfying the hypothesis of Assumption~\ref{assumt:Regularity}. Let us fix $\rho>0$; if there exist $\mu\in (0,1)$ and $\nu\in \R$ such that
\[
\begin{aligned}
V_b(x)&\leq \mu V_a(x),\;\;\forall \;x\in \R^n;\\
D^+_f\,V_b(x)&\leq \nu V_b(x),\;\;\;\forall \;x\in \R^n;\\
\log(\mu)&+\overline \tau (\nu+\rho)\leq 0;
\end{aligned}
\]
then it holds that 
\[
V_b(\Phi_f(\tau,x))\leq e^{-\rho\tau}V_a(x),\;\;\forall x\in \R^n,\;\forall \tau\in [0,\overline \tau].
\]
\end{lemma}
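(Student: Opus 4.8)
The plan is to integrate the Dini differential inequality $D^+_fV_b(x)\le\nu\,V_b(x)$ along solutions of $\dot x=f(x)$ by a Gr\"onwall/comparison argument, and then to combine the resulting estimate with the pointwise bound $V_b\le\mu V_a$ and with the scalar inequality $\log(\mu)+\overline{\tau}(\nu+\rho)\le0$. So the proof splits into a dynamical step (propagating $V_b$ along the flow) and a purely algebraic step.

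First I would fix $x\in\R^n$ and set $\phi(t):=\Phi_f(t,x)$, which is $\mathcal{C}^1$ and defined for all $t\ge0$ by Assumption~\ref{assumt:Regularity}. Consider the scalar function $g(t):=V_b(\phi(t))$; since $V_b$ is locally Lipschitz and $\phi$ is $\mathcal{C}^1$, $g$ is locally Lipschitz. Using the semigroup identity $\Phi_f(t+h,x)=\Phi_f(h,\phi(t))$, its upper right Dini derivative satisfies, for every $t\ge0$, $D^+g(t)=\limsup_{h\to0^+}\frac{V_b(\Phi_f(h,\phi(t)))-V_b(\phi(t))}{h}=D^+_fV_b(\phi(t))\le\nu\,V_b(\phi(t))=\nu\,g(t)$, the inequality being the second hypothesis. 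Passing to $u(t):=e^{-\nu t}g(t)$, one computes $D^+u(t)=e^{-\nu t}\big(D^+g(t)-\nu g(t)\big)\le0$ for all $t\ge0$; a locally Lipschitz (hence absolutely continuous) function with everywhere nonpositive upper Dini derivative is non-increasing, so $u(t)\le u(0)$, i.e. $V_b(\Phi_f(\tau,x))\le e^{\nu\tau}V_b(x)$ for all $\tau\ge0$. (Alternatively, one may invoke the comparison lemma directly, e.g.~\cite[Lemma 3.4]{khalil2002nonlinear}.)

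It then remains to combine the estimates. Since $\mu e^{\nu\tau}>0$, the first hypothesis gives $V_b(\Phi_f(\tau,x))\le\mu e^{\nu\tau}V_a(x)$; recalling that $V_a(x)\ge0$ (as is the case in every instance where this lemma is used in the paper, where the $V$'s are norms or are sandwiched by $\mathcal{K}_\infty$ functions), the desired bound $V_b(\Phi_f(\tau,x))\le e^{-\rho\tau}V_a(x)$ follows once we know $\mu e^{\nu\tau}\le e^{-\rho\tau}$, i.e. $\log(\mu)+\tau(\nu+\rho)\le0$, for every $\tau\in[0,\overline{\tau}]$. The map $\tau\mapsto\log(\mu)+\tau(\nu+\rho)$ is affine; it equals $\log(\mu)<0$ at $\tau=0$ (because $\mu\in(0,1)$) and is $\le0$ at $\tau=\overline{\tau}$ by the third hypothesis, so, the maximum of an affine function on an interval being attained at an endpoint, it is $\le0$ on all of $[0,\overline{\tau}]$. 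Since $x\in\R^n$ was arbitrary, this completes the argument.

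The only genuinely technical point is the middle step: one must justify carefully that the ``Dini derivative with respect to $f$'' of $V_b$ indeed upper-bounds the ordinary upper Dini derivative of $t\mapsto V_b(\Phi_f(t,x))$ along the flow — this is exactly the semigroup computation above — and that the scalar differential inequality can be integrated at the regularity of a merely locally Lipschitz $V_b$; both are standard once $g$ is recognized to be locally Lipschitz in $t$. The remaining work (the affine-function observation and the sign bookkeeping in the last paragraph) is elementary.
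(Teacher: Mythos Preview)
Your proof is correct and follows essentially the same route as the paper: integrate the Dini inequality via a comparison/Gr\"onwall argument to obtain $V_b(\Phi_f(\tau,x))\le e^{\nu\tau}V_b(x)$, combine with $V_b\le\mu V_a$, and then reduce to the scalar inequality $\mu e^{\nu\tau}\le e^{-\rho\tau}$ on $[0,\overline\tau]$. The only cosmetic differences are that the paper invokes the comparison lemma by citation rather than spelling out the $u(t)=e^{-\nu t}g(t)$ argument, and handles the last scalar step by first disposing of the trivial case $\nu<-\rho$ (so that $\tau\mapsto\log(\mu)+\tau(\nu+\rho)$ is nondecreasing) instead of your endpoint check; your explicit acknowledgment of the implicit nonnegativity $V_a\ge0$ is a point the paper's proof uses silently.
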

\begin{proof}
For any $\tau \in [0,\overline \tau]$, using the comparison lemma (see for example~\cite[Lemma 3.4]{khalil2002nonlinear}), we have
\[
V_b(\Phi_f(\tau,x))\leq e^{\nu \tau} V_b(x)\leq \mu e^{\nu \tau} V_a(x).
\]
We suppose $\nu\geq -\rho$, otherwise the statement is trivial.
Given any $x\in \R^n$, we have
\[
\begin{aligned}
\mu e^{\nu \tau} V_a(x)\leq e^{-\rho\tau}V_a(x),\;\forall\,\tau\in [0,\overline \tau]\;\;\Leftrightarrow\;\;\mu \leq e^{-(\nu+\rho)\tau},\;\forall\,\tau\in [0,\overline \tau]\;\;\Leftrightarrow\; \;\log(\mu)+(\nu+\rho)\overline \tau\leq 0,
\end{aligned}
\]
concluding the proof.
\end{proof}
The following statements allows us to remove the explicit dependence on the subsystems solutions in the inequalities~\eqref{eq:UpperLowerInequaility2} and~\eqref{eq:UpperLowerInequaility221}.

\begin{lemma}\label{lemma:EasyImplication}
Consider $f:\R^n\to \R^n$ satisfying the hypothesis of Assumption~\ref{assumt:Regularity}, a $\overline \tau>0$ and a $K\in \N\setminus\{0\}$. Suppose there exist $V_0,\dots, V_K\in \cC^1(\R^n,\R)$ positive definite and  $\rho\in \R$ such that
\begin{equation}\label{eq:GradientSystems1}
\begin{cases}
\nabla V_k(x)\cdot f(x)\;\;\;\,+\frac{K(V_k(x)-V_{k-1}(x))}{\overline\tau}\leq \rho V_k(x),\;\forall\,x\in \R^n,\\
\nabla V_{k-1}(x)\cdot f(x)+\frac{K(V_k(x)-V_{k-1}(x))}{\overline\tau}\leq \rho V_{k-1}(x),\;\forall\,x\in \R^n.
\end{cases}
\end{equation}
for all $k\in \{1,\dots, K\}$. This implies that 
\begin{equation}\label{eq:decreasingProperty}
V_K(\phi_f(\overline \tau,x))\leq e^{\rho\overline \tau}V_0(x),\;\;\;\forall \;x\in \R^n.
\end{equation}
\end{lemma}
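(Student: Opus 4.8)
The plan is to prove \eqref{eq:decreasingProperty} by a discretization/telescoping argument: I will subdivide the interval $[0,\overline\tau]$ into $K$ subintervals of length $\overline\tau/K$, and on the $k$-th subinterval I will use the pair of inequalities in \eqref{eq:GradientSystems1} (indexed by that same $k$) to control the evolution of an appropriate piecewise-linear-in-time interpolation of the functions $V_{k-1}$ and $V_k$. Concretely, on $[(k-1)\overline\tau/K,\, k\overline\tau/K]$, parametrize $s\in[0,\overline\tau/K]$ and set $G_k(s):=\big(1-\tfrac{Ks}{\overline\tau}\big)V_{k-1} + \tfrac{Ks}{\overline\tau}V_k$, which is a convex combination interpolating $V_{k-1}$ at $s=0$ and $V_k$ at $s=\overline\tau/K$. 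Then I evaluate $t\mapsto G_k(t-(k-1)\overline\tau/K,\,\Phi_f(t,x))$ along the flow and bound its derivative.

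The key computation: writing $g(s):=G_k(s,\Phi_f(s+\cdot\,,\cdot))$ evaluated along a trajectory segment, its time derivative has two contributions — the explicit $\partial_s$ of the coefficients, which produces $\tfrac{K}{\overline\tau}\big(V_k-V_{k-1}\big)$ evaluated at the current point, and the convective term $\big(1-\tfrac{Ks}{\overline\tau}\big)\nabla V_{k-1}\cdot f + \tfrac{Ks}{\overline\tau}\nabla V_k\cdot f$. Adding the two inequalities in \eqref{eq:GradientSystems1} with weights $\big(1-\tfrac{Ks}{\overline\tau}\big)$ and $\tfrac{Ks}{\overline\tau}$ respectively — note the term $\tfrac{K(V_k-V_{k-1})}{\overline\tau}$ appears with coefficient one in \emph{both} inequalities, so the weighted sum still has it with coefficient $1$ — yields exactly $\tfrac{d}{ds}g(s)\le \rho\, g(s)$. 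By the scalar comparison lemma (as used in Lemma~\ref{lemma:AppendixAuxiliary2}), this gives $g(\overline\tau/K)\le e^{\rho\overline\tau/K} g(0)$, i.e. in terms of the flow,
\[
V_k\big(\Phi_f(\tfrac{\overline\tau}{K},y)\big)\le e^{\rho\overline\tau/K}\,V_{k-1}(y),\qquad\forall\,y\in\R^n,
\]
where I used that $G_k$ at the endpoints reduces to $V_k$ and $V_{k-1}$. Then I chain these $K$ estimates: apply the $k=K$ estimate at $y=\Phi_f(\tfrac{(K-1)\overline\tau}{K},x)$, then the $k=K-1$ estimate, and so on, using the semigroup property $\Phi_f(a,\Phi_f(b,x))=\Phi_f(a+b,x)$. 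The accumulated factor is $\big(e^{\rho\overline\tau/K}\big)^K=e^{\rho\overline\tau}$, giving $V_K(\Phi_f(\overline\tau,x))\le e^{\rho\overline\tau}V_0(x)$, which is \eqref{eq:decreasingProperty}.

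The main obstacle — really the only nontrivial point — is justifying the differential inequality $\tfrac{d}{ds}g(s)\le\rho g(s)$ rigorously and then invoking the comparison lemma: the functions $V_j$ are $\cC^1$ and $f$ is locally Lipschitz, so $t\mapsto V_j(\Phi_f(t,x))$ is $\cC^1$ along trajectories and the chain rule applies, so this is clean; one only needs to be slightly careful that the comparison lemma as usually stated handles the (continuous) right-hand side $\rho g$ and that $g$ is continuous. A secondary point worth a line is that the weighted sum of the two hypotheses in \eqref{eq:GradientSystems1} is legitimate because the weights $1-\tfrac{Ks}{\overline\tau}$ and $\tfrac{Ks}{\overline\tau}$ are nonnegative on the relevant subinterval, so the inequalities are preserved under the convex combination. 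Everything else is bookkeeping with the semigroup property.
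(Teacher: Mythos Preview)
Your proposal is correct and follows essentially the same approach as the paper: both construct the piecewise-linear-in-time interpolation between $V_{k-1}$ and $V_k$ on the $k$-th subinterval, take the convex combination of the two inequalities in~\eqref{eq:GradientSystems1} to obtain the scalar differential inequality $\tfrac{d}{dt}g\le\rho g$, and then apply the comparison lemma. The only cosmetic difference is that the paper packages the interpolants into a single function $U:[0,\overline\tau]\times\R^n\to\R$ and applies the comparison lemma once over $[0,\overline\tau]$, whereas you apply it on each subinterval and telescope; the content is the same.
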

\begin{proof}[Sketch of the Proof]
The proof basically follows by~\cite[Lemma 2]{DelPaq22}.
Define $t_k=\frac{k}{K}\overline\tau$ for $k\in \{0,\dots, K\}$ and consider the function $U:[0,\overline\tau]\times \R^n\to \R$ as
\[
U(t,x):=\frac{K(t-t_{k-1})}{\tau}V_k(x)+\frac{K(t_k-t)}{\tau}V_{k-1}(x)\;\;
\]
for $t\in[t_{k-1},t_k]$ and for all $k\in\{1,\dots, K\}$. 
Using~\eqref{eq:GradientSystems1}, it can be seen that
\[
\begin{aligned}
\frac{d}{dt}U(t,\Phi_f(t,x))\leq \rho U(t,x),
\end{aligned}
\]
and then it suffices to apply the comparison lemma~\cite[Lemma 3.4]{khalil2002nonlinear}.
\end{proof}
We now provide, for completeness, the specification of Lemma~\ref{lemma:EasyImplication} in the case of linear subsystem (i.e. $f(x)\equiv Ax$, $\forall x\in \R^n$) and quadratic norms (i.e. $U_j(x)\equiv\sqrt{x^\top P_jx}$) for a certain $P_j\succ 0$). It turns out that, in this case, we have an equivalence.

\begin{lemma}\label{lemma:EasyImplicationLinear}
Consider $A\in \R^{n\times n}$, $\overline \tau>0$ and $\rho>0$. Given two $P_b,P_a\succ 0$ we have that 
\begin{equation}\label{eq:decreasingPropertyLinear}
e^{A^\top\overline \tau} P_be^{A\overline \tau}\prec e^{\rho 2\overline\tau}P_a,
\end{equation}
if and only if there exist $K\in \N$ and $P_0,\dots, P_K\succ 0$ positive definite such that $P_0=P_a$, $P_K=P_b$ and 
\begin{equation}\label{eq:GradientSystemsLinear1}
\begin{cases}
A^\top P_k+P_kA&+\frac{K(P_k-P_{k-1})}{\overline\tau}\prec 2\rho P_k,\\
A^\top P_{k-1}+P_{k-1}A&+\frac{K(P_k-P_{k-1})}{\overline\tau}\prec 2\rho P_{k-1}.
\end{cases}
\end{equation}
for all $k\in \{1,\dots, K\}$. 
\end{lemma}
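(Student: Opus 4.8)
The plan is to pass through the \emph{differential} matrix inequality
\[
A^\top P(t)+P(t)A+\dot P(t)\prec 2\rho P(t),\qquad t\in[0,\overline\tau],
\]
for a smooth symmetric-matrix path $P(\cdot)$ with $P(t)\succ 0$, $P(0)=P_a$, $P(\overline\tau)=P_b$, and to exploit the change of variables $\Pi(t):=e^{-2\rho t}e^{A^\top t}P(t)e^{At}$. Since $A$ commutes with $e^{At}$, a direct differentiation gives the identity $\dot\Pi(t)=e^{-2\rho t}e^{A^\top t}\bigl(A^\top P(t)+P(t)A+\dot P(t)-2\rho P(t)\bigr)e^{At}$. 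Because $e^{At}$ is invertible and $e^{-2\rho t}>0$, this makes the displayed inequality equivalent to $\dot\Pi(t)\prec 0$, while $P(t)\succ 0\Leftrightarrow\Pi(t)\succ 0$; moreover $\Pi(0)=P_a$ and $\Pi(\overline\tau)=e^{-2\rho\overline\tau}e^{A^\top\overline\tau}P_b e^{A\overline\tau}$. This one identity is the conceptual core of the whole proof.

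For the implication $\eqref{eq:GradientSystemsLinear1}\Rightarrow\eqref{eq:decreasingPropertyLinear}$ I would invoke Lemma~\ref{lemma:EasyImplication} with $f(x)=Ax$ and $V_k(x)=x^\top P_k x$, for which $\nabla V_k(x)\cdot f(x)=x^\top(A^\top P_k+P_kA)x$, so that \eqref{eq:GradientSystemsLinear1} is precisely the matrix form of the hypotheses of Lemma~\ref{lemma:EasyImplication} with the role of $\rho$ there played by $2\rho$. Since \eqref{eq:GradientSystemsLinear1} is \emph{strict} and there are only finitely many indices, one has in fact $A^\top P_k+P_kA+\tfrac{K(P_k-P_{k-1})}{\overline\tau}\preceq(2\rho-\varepsilon)P_k$ (and the analogue with index $k-1$) for some $\varepsilon>0$; applying Lemma~\ref{lemma:EasyImplication} with parameter $2\rho-\varepsilon$ yields $x^\top e^{A^\top\overline\tau}P_b e^{A\overline\tau}x=V_K(e^{A\overline\tau}x)\le e^{(2\rho-\varepsilon)\overline\tau}V_0(x)=e^{(2\rho-\varepsilon)\overline\tau}x^\top P_a x$ for all $x$, i.e.\ $e^{A^\top\overline\tau}P_b e^{A\overline\tau}\preceq e^{(2\rho-\varepsilon)\overline\tau}P_a\prec e^{2\rho\overline\tau}P_a$, which is \eqref{eq:decreasingPropertyLinear}. (Equivalently one can argue directly: the piecewise-linear interpolant of the $P_k$ satisfies the above DLMI off the breakpoints by a convex-combination argument, hence $\dot\Pi\prec 0$ a.e.\ with a uniform gap by compactness, and integrating $v^\top\Pi(t)v$ over $[0,\overline\tau]$ for each $v\neq 0$ gives $\Pi(\overline\tau)\prec\Pi(0)$.)

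For the converse $\eqref{eq:decreasingPropertyLinear}\Rightarrow\eqref{eq:GradientSystemsLinear1}$, set $P_b':=e^{-2\rho\overline\tau}e^{A^\top\overline\tau}P_b e^{A\overline\tau}$, so that the hypothesis reads $P_b'\prec P_a$ with $P_a,P_b'\succ 0$. Take the segment $\Pi(t):=\tfrac{\overline\tau-t}{\overline\tau}P_a+\tfrac{t}{\overline\tau}P_b'$, which is positive definite with $\dot\Pi(t)\equiv\tfrac1{\overline\tau}(P_b'-P_a)\prec 0$, and pull it back by $P(t):=e^{2\rho t}e^{-A^\top t}\Pi(t)e^{-At}$. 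Then $P\in\cC^\infty$, $P(t)\succ 0$, $P(0)=P_a$, $P(\overline\tau)=P_b$, and by the identity above $A^\top P(t)+P(t)A+\dot P(t)-2\rho P(t)=e^{2\rho t}e^{-A^\top t}\dot\Pi(t)e^{-At}=:-G(t)\prec 0$ on $[0,\overline\tau]$, with $G(\cdot)$ continuous, hence $G(t)\succeq cI$ for some $c>0$ by compactness. Finally I would discretize: with $t_k:=\tfrac kK\overline\tau$ and $P_k:=P(t_k)$ one has $P_0=P_a$, $P_K=P_b$, $P_k\succ 0$, and
\[
A^\top P_k+P_kA+\tfrac{K(P_k-P_{k-1})}{\overline\tau}-2\rho P_k=\bigl(A^\top P(t_k)+P(t_k)A+\dot P(t_k)-2\rho P(t_k)\bigr)+\Bigl(\tfrac{P(t_k)-P(t_{k-1})}{t_k-t_{k-1}}-\dot P(t_k)\Bigr),
\]
where the first summand is $\preceq -cI$ and the second is bounded in norm by the modulus of continuity of $\dot P$ at scale $\overline\tau/K$, uniformly in $k$; the same holds with $P_{k-1}$ replacing $P_k$ in the first two slots. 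Choosing $K$ large enough makes both lines of \eqref{eq:GradientSystemsLinear1} strict, which finishes the proof.

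I expect the only genuinely delicate point to be the discretization estimate in the converse — specifically making ``uniformly in $k$'' precise — but since $P\in\cC^1$ on the compact interval $[0,\overline\tau]$, the derivative $\dot P$ is uniformly continuous there and $\bigl\|\tfrac{P(t_k)-P(t_{k-1})}{t_k-t_{k-1}}-\dot P(t_k)\bigr\|\le\sup_{|s-s'|\le\overline\tau/K}\|\dot P(s)-\dot P(s')\|\to 0$, so it is routine bookkeeping rather than a real obstacle. Everything else reduces to the change of variables $\Pi=e^{-2\rho t}e^{A^\top t}Pe^{At}$, which trivializes the differential inequality into $\dot\Pi\prec 0$ and thereby turns the construction of the chain into a straight-line interpolation between $P_a$ and $P_b'$.
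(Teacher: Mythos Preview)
Your proof is correct. For the implication $\eqref{eq:GradientSystemsLinear1}\Rightarrow\eqref{eq:decreasingPropertyLinear}$ you do exactly what the paper suggests, namely specialize Lemma~\ref{lemma:EasyImplication} to $f(x)=Ax$ and $V_k(x)=x^\top P_kx$; your handling of the strict versus non-strict inequalities via the $\varepsilon$-slack is the right way to close that small gap.

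For the converse $\eqref{eq:decreasingPropertyLinear}\Rightarrow\eqref{eq:GradientSystemsLinear1}$ the paper gives no argument at all, only pointers to \cite{AllSha11,Xiangxiao14,Xiang15} and \cite[Section~2.4]{Geromel23}. Your route through the transformation $\Pi(t)=e^{-2\rho t}e^{A^\top t}P(t)e^{At}$ is clean and self-contained: it linearizes the differential Lyapunov inequality to the trivial condition $\dot\Pi\prec 0$, so the interpolating path can be taken as a straight segment in the $\Pi$ variable, and the discretization step is then a standard uniform-continuity estimate on the compact interval. The constructions in the cited references proceed somewhat differently (typically building $P(t)$ directly as a congruence transform of a fixed Lyapunov solution, or via a discretized differential LMI), but your argument is at least as transparent and arguably more elementary, since it reduces the existence of the smooth path to the single hypothesis $P_b'\prec P_a$. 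The point you flag as delicate --- the uniform-in-$k$ control of the finite-difference error --- is indeed routine once $\dot P$ is uniformly continuous on $[0,\overline\tau]$, so there is no gap.
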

For the proof, which basically follows the idea of proof of Lemma~\ref{lemma:EasyImplication}, we refer to~\cite{AllSha11,Xiangxiao14,Xiang15}. \textcolor{black}{For a recent general and self-contained discussion, we refer to~\cite[Section 2.4]{Geromel23}}.

\bibliography{biblio} 

\begin{thebibliography}{10}

\bibitem{ahmadi}
A.~A. Ahmadi, R.~M. Jungers, P.~A. Parrilo, and M.~Roozbehani.
\newblock Joint spectral radius and path-complete graph {Lyapunov} functions.
\newblock {\em SIAM Journal on Control and Optimization}, 52(1):687--717, 2014.

\bibitem{AllSha11}
L.~I. Allerhand and U.~Shaked.
\newblock Robust stability and stabilization of linear switched systems with
  dwell time.
\newblock {\em IEEE Transactions on Automatic Control}, 56(2):381--386, 2011.

\bibitem{AngeliNote}
D.~Angeli.
\newblock A note on stability of arbitrarily switched homogeneous systems.
\newblock Technical report, 1999.

\bibitem{BacRosier}
A.~Bacciotti and L.~Rosier.
\newblock {\em Liapunov Functions and Stability in Control Theory}, volume 267
  of {\em Lecture Notes in Control and Information Sciences}.
\newblock Springer-Verlag, 2005.

\bibitem{BlaCol10}
F.~{Blanchini} and P.~{Colaneri}.
\newblock Vertex/plane characterization of the dwell-time property for
  switching linear systems.
\newblock In {\em 49th IEEE Conference on Decision and Control (CDC)}, pages
  3258--3263, 2010.

\bibitem{Bri16}
C.~Briat.
\newblock Stability analysis and stabilization of stochastic linear impulsive,
  switched and sampled-data systems under dwell-time constraints.
\newblock {\em Automatica}, 74:279--287, 2016.

\bibitem{Briat17}
C.~Briat.
\newblock Dwell-time stability and stabilization conditions for linear positive
  impulsive and switched systems.
\newblock {\em Nonlinear Analysis: Hybrid Systems}, 24:198--226, 2017.

\bibitem{BriSeu13}
C.~Briat and A.~Seuret.
\newblock Affine characterizations of minimal and mode-dependent dwell-times
  for uncertain linear switched systems.
\newblock {\em IEEE Transactions on Automatic Control}, 58(5):1304--1310, 2013.

\bibitem{CheCol12}
G.~Chesi, P.~Colaneri, J.~C. Geromel, R.~Middleton, and R.~Shorten.
\newblock A nonconservative {LMI} condition for stability of switched systems
  with guaranteed dwell time.
\newblock {\em IEEE Transactions on Automatic Control}, 57(5):1297--1302, 2012.

\bibitem{ChiGugPro21}
Y.~Chitour, N.~Guglielmi, V.~Yu. Protasov, and M.~Sigalotti.
\newblock Switching systems with dwell time: Computing the maximal {Lyapunov}
  exponent.
\newblock {\em Nonlinear Analysis: Hybrid Systems}, 40:101021, 2021.

\bibitem{COLGer2008}
P.~Colaneri, J.C. Geromel, and A.~Astolfi.
\newblock Stabilization of continuous-time switched nonlinear systems.
\newblock {\em Systems \& Control Letters}, 57(1):95--103, 2008.

\bibitem{DebDelRos22}
V.~Debauche, M.~{Della Rossa}, and R.M. Jungers.
\newblock Comparison of path-complete {Lyapunov} functions via
  template-dependent lifts.
\newblock {\em Nonlinear Analysis: Hybrid Systems}, 46:101237, 2022.

\bibitem{DelPaq22}
M.~{Della Rossa}, M.~Pasquini, and D.~Angeli.
\newblock Continuous-time switched systems with switching frequency
  constraints: {Path}-complete stability criteria.
\newblock {\em Automatica}, 137:110099, 2022.

\bibitem{Geromel23}
J.~C. Geromel.
\newblock {\em Differential Linear Matrix Inequalities In Sampled-Data Systems
  Filtering and Control}.
\newblock Springer, 2023.

\bibitem{GerCol06}
J.~C. Geromel and P.~Colaneri.
\newblock Stability and stabilization of continuous-time switched linear
  systems.
\newblock {\em SIAM Journal on Control and Optimization}, 45(5):1915--1930,
  2006.

\bibitem{goebel2012hybrid}
R.~Goebel, R.G. Sanfelice, and A.R. Teel.
\newblock {\em Hybrid Dynamical Systems: Modeling, Stability, and Robustness}.
\newblock Princeton University Press, 2012.

\bibitem{HafTan23}
S.~Hafstein and A.~Tanwani.
\newblock Linear programming based lower bounds on average dwell-time via
  multiple {Lyapunov} functions.
\newblock {\em European Journal of Control}, page 100838, 2023.

\bibitem{HeemelsJohn12}
W.P.M.H. Heemels, K.H. Johansson, and P.~Tabuada.
\newblock An introduction to event-triggered and self-triggered control.
\newblock In {\em 2012 IEEE 51st IEEE Conference on Decision and Control
  (CDC)}, pages 3270--3285, 2012.

\bibitem{HesMor99}
J.P. Hespanha and A.S. Morse.
\newblock Stability of switched systems with average dwell-time.
\newblock In {\em Proceedings of the 38th IEEE Conference on Decision and
  Control}, volume~3, pages 2655--2660 vol.3, 1999.

\bibitem{Kellett2014}
C.M. Kellett.
\newblock A compendium of comparison function results.
\newblock {\em Mathematics of Control, Signals, and Systems}, 26(3):339--374,
  2014.

\bibitem{khalil2002nonlinear}
H.~K. Khalil.
\newblock {\em Nonlinear Systems}.
\newblock Pearson Education. Prentice Hall, 2002.

\bibitem{KunChat15}
A.~Kundu and D.~Chatterjee.
\newblock Stabilizing switching signals for switched systems.
\newblock {\em IEEE Transactions on Automatic Control}, 60(3):882--888, 2015.

\bibitem{Lib03}
D.~Liberzon.
\newblock {\em Switching in Systems and Control}.
\newblock Systems \& Control: Foundations \& Applications. Birkh{\"a}user,
  2003.

\bibitem{LinAnt09}
H.~{Lin} and P.~J. {Antsaklis}.
\newblock Stability and stabilizability of switched linear systems: A survey of
  recent results.
\newblock {\em IEEE Transactions on Automatic Control}, 54(2):308--322, 2009.

\bibitem{Mancilla00}
J.L. Mancilla-Aguilar and R.A. Garc\'ia.
\newblock A converse {Lyapunov} theorem for nonlinear switched systems.
\newblock {\em Systems \& Control Letters}, 41(1):67--71, 2000.

\bibitem{Morse}
A.S. Morse.
\newblock Supervisory control of families of linear set-point controllers -
  part i. exact matching.
\newblock {\em IEEE Transactions on Automatic Control}, 41(10):1413--1431,
  1996.

\bibitem{PhiEssDul16}
M.~Philippe, R.~Essick, G.E. Dullerud, and R.M. Jungers.
\newblock Stability of discrete-time switching systems with constrained
  switching sequences.
\newblock {\em Automatica}, 72:242--250, 2016.

\bibitem{ProKam23}
V.Yu. Protasov and R.~Kamalov.
\newblock Stability of continuous time linear systems with bounded switching
  intervals.
\newblock {\em SIAM Journal on Control and Optimization}, 61(5):3051--3075,
  2023.

\bibitem{ShoWir07}
R.~Shorten, F.~Wirth, O.~Mason, K.~Wulff, and C.~King.
\newblock Stability criteria for switched and hybrid systems.
\newblock {\em SIAM Review}, 49(4):545--592, 2007.

\bibitem{Son98}
E.D. Sontag.
\newblock Comments on integral variants of {ISS}.
\newblock {\em Systems \& Control Letters}, 34(1):93--100, 1998.

\bibitem{TeelPraly2000}
A.R. Teel and L.~Praly.
\newblock A smooth {Lyapunov} function from a class-$\mathcal{KL}$ estimate
  involving two positive semidefinite functions.
\newblock {\em ESAIM: COCV}, 5:313--367, 2000.

\bibitem{Wirth2005}
F.~Wirth.
\newblock A converse {Lyapunov} theorem for linear parameter-varying and linear
  switching systems.
\newblock {\em SIAM Journal on Control and Optimization}, 44(1):210--239, 2005.

\bibitem{WirthCDC05}
F.~Wirth.
\newblock A converse {Lyapunov} theorem for switched linear systems with dwell
  times.
\newblock In {\em Proceedings of the 44th IEEE Conference on Decision and
  Control}, pages 4572--4577, 2005.

\bibitem{Xiang15}
W.~Xiang.
\newblock On equivalence of two stability criteria for continuous-time switched
  systems with dwell time constraint.
\newblock {\em Automatica}, 54:36--40, 2015.

\bibitem{Xiangxiao14}
W.~Xiang and J.~Xiao.
\newblock Stabilization of switched continuous-time systems with all modes
  unstable via dwell time switching.
\newblock {\em Automatica}, 50(3):940--945, 2014.

\bibitem{YangJia14}
H.~Yang, B.~Jiang, and V.~Cocquempot.
\newblock A survey of results and perspectives on stabilization of switched
  nonlinear systems with unstable modes.
\newblock {\em Nonlinear Analysis: Hybrid Systems}, 13:45--60, 2014.

\bibitem{YangWang20}
W.~Yang, Y.-W. Wang, C.~Wen, and J.~Daafouz.
\newblock Exponential stability of singularly perturbed switched systems with
  all modes being unstable.
\newblock {\em Automatica}, 113:108800, 2020.

\bibitem{YinJay23}
H.~Yin, B.~Jayawardhana, and S.~Trenn.
\newblock Stability of switched systems with multiple equilibria: A mixed
  stable–unstable subsystem case.
\newblock {\em Systems \& Control Letters}, 180:105622, 2023.

\bibitem{ZhaoShi17}
X.~Zhao, P.~Shi, Y.~Yin, and S.K. Nguang.
\newblock New results on stability of slowly switched systems: A multiple
  discontinuous {Lyapunov} function approach.
\newblock {\em IEEE Transactions on Automatic Control}, 62(7):3502--3509, 2017.

\end{thebibliography}
 \bibliographystyle{plain}

\end{document}